\documentclass[fullpage, reqno]{amsart}

\usepackage{graphicx} 
 \setlength\parindent{5pt}
 \usepackage{xcolor}
\usepackage{latexsym}
\usepackage{amsmath}
\usepackage{amsfonts}
\usepackage{amssymb}
\usepackage{tikz}
\usepackage{amsthm}
\usetikzlibrary{arrows, shapes}
\usepackage{graphicx}
\usepackage{enumitem}
\usepackage{mathrsfs}
\usepackage{enumitem}
\usepackage{hyperref}
\usepackage[arrow, matrix, curve]{xy}
\usepackage[nameinlink,capitalize]{cleveref}
\usepackage{caption}
\usepackage{subcaption}

\makeatletter
\makeatother
 
\newtheorem{theorem}{Theorem}[section]
\newtheorem{lemma}[theorem]{Lemma}
\newtheorem{corollary}[theorem]{Corollary}
\newtheorem{proposition}[theorem]{Proposition}

\theoremstyle{definition}
\newtheorem{definition}[theorem]{Definition}

\newtheorem{example}[theorem]{Example}

\theoremstyle{remark}
\newtheorem{Remark}[theorem]{\rm \bf Remark}
\numberwithin{equation}{section}

\newcommand{\hgf}[1]{{}_2F_1( #1 )}

\newcommand{\ltfrac}[2]{\mbox{\Large$\frac{#1}{#2}$}}
\newcommand{\mr}[1]{\mathrm{#1}}
\newcommand{\ti}[1]{\widetilde{#1}}


\newcommand{\C}[1]{\mathbb{C}^{#1}}
\newcommand{\bbC}{\mathbb{C}}

\newcommand{\R}[1]{\mathbb{R}^{#1}}
\newcommand{\bbR}{\mathbb{R}}
\newcommand{\N}{\mathbb{N}}

\newcommand{\bz}{\mathbf{z}}
\newcommand{\bw}{\mathbf{w}}


\newcommand{\fg}{\mathfrak{g}}

\newcommand{\fk}{\mathfrak{k}}



\newcommand{\rH}{\mathrm{H}}
\newcommand{\rS}{\mathrm{S}}
\newcommand{\rT}{\mathrm{T}}

 \newcommand{\oPsi}{\widetilde{\Psi}}

\newcommand{\SO}{\mathrm{SO}}

\newcommand{\so}{\mathfrak{so}}

\newcommand{\Hn}{\mathbb{H}^n}
\newcommand{\oHn}{\overline{{\mathbb H}}^n}
\newcommand{\dS}{\mathrm{dS}^n}
\newcommand{\dSC}{\mathrm{dS}^n_{\mathbb{C}}}
\newcommand{\Sn}{\mathbb{S}^n}
\newcommand{\oXi}{\overline{\Xi}}

\newcommand{\Fl}{{}_2F_1}

\let\oldproofname=\proofname
\renewcommand{\proofname}{\rm\bf{\oldproofname}}
\newcommand{\ip}[2]{\langle #1,#2\rangle}

\title[Analytic wavefront set on the De Sitter space]{Spherical Distributions on the De Sitter Space and their Spectral Singularities}

\author{Iswarya Sitiraju}
\address{Department of Mathematics, Louisiana State University, Baton Rouge, LA 70803, USA}
\email{isitir1@lsu.edu}

\begin{document}

\begin{abstract}
    A spherical distribution is an eigendistribution of the Laplace-Beltrami operator with certain invariance on the de Sitter space. Let $G'=O_{1,n}(\R{})$ be the Lorentz group  and $H' = O_{1,n-1}(\R{})$ be its subgroup. The authors \'Olafsson and  Sitiraju have constructed the spherical distributions, which are $H'$-invariant, as boundary values of some sesquiholomorphic kernels. In this survey article we will explore the connections of these kernels with reflection positivity and representations of the group $G = SO_{1,n}(\R{})_e$, which is the connected component of the Lorentz group. We will also discuss the singularities of spherical distributions in terms of their wavefront set.
\end{abstract}
\maketitle
\section{Introduction}
The $n$-dimensional de Sitter space $\dS$, which is a one-sheeted hyperboloid, is a simple model of the universe in special relativity with constant positive curvature. There have been several studies done to understand quantum field theories on the de Sitter space including the papers \cite{BM96, BM04, BV96, BV97} where the authors studied free fields and the related two point functions $\mathcal{W}_{\lambda}(x_1,x_2)$. The theory of interacting quantum fields on the de Sitter space of dimension 2 is discussed in the paper \cite{BJM13}. On the other hand, the authors in \cite{NO22} study some aspects of algebraic quantum field theory on casual symmetric spaces of which the de Sitter spaces is an example. One of the tool used in \cite{NO22} is an open complex domain called the crown domain $\Xi$. 

In \cite{NO18}, the authors showed that the de Sitter space lies on the boundary of $\Xi$ and its conjugate $\oXi$ which are open subsets of the complex unit sphere.  The crown $\Xi$ is holomorphically equivalent to the Lorentzian tuboid $\mathcal{T}^{+}$ in the complexified de Sitter space $\dS_{\C{}}$ defined in \cite{BM96, BM04, BV96, BV97}. Similarily, $\oXi$ is equivalent to the tuboid $\mathcal{T}^-$ in $\dS_{\C{}}$. The de Sitter space lies on the boundary of $\mathcal{T}^{+}$ and $\mathcal{T}^{-}$.

Let $\rho = (n-1)/2$. For $\lambda \in i[0,\infty) \cup (0,\rho)$, the distribution $\mathcal{W}_{\lambda}(x_1,x_2)$ given in \cite{BM96} satisfies the Klein-Gordon equation $\Delta + (\rho^2-\lambda^2)$ in both variables. Moreover, this distribution is the boundary value of some analytic kernels in $\mathcal{T}^+$ and $\mathcal{T}^-$ called ``perikernels". In \cite{NO18, NO20}, the authors have introduced the kernel $\Psi_\lambda$ up to a constant which corresponds to the perikernels and showed that $\Psi_{\lambda}$ was represented as a hypergeometric function. Analogous to $\mathcal{W}_\lambda$, the boundary values of the kernels $\Psi_{\lambda}$ and $\ti{\Psi}_\lambda$ defined below, are studied in \cite{OS23} where

\begin{equation*}\label{eq : pm1}    \Psi_{\lambda}(z,w) =  {}_2F_1\Big(\rho+\lambda,\rho-\lambda; \frac{n}{2}; \ltfrac{1+[z,\Bar{w}]}{2}\Big), \quad  z,w \in \Xi,
\end{equation*}

and
\begin{equation*}\label{eq:pm2}
    \ti{\Psi}_{\lambda}({z},{w}) =  {}_2F_1\Big(\rho+\lambda,\rho-\lambda; \frac{n}{2}; \ltfrac{1+[{z},\Bar{w}]}{2}\Big), \quad  {z},{w} \in \oXi.
\end{equation*}

These kernels were obtained by reflection positivity on sphere. Reflection positivity is one of the Osterwalder-Schrader axioms of constructive quantum field theory. It is a necessary and sufficient condition for a Euclidean field theory that has Euclidean symmetries to analytically continue to a relativistic field theory with Lorentzian symmetries. The initial attempt was done by E. Nelson in the paper \cite{N73} and the breakthrough was done in the paper \cite{OS73, OS75}. 

The kernels $\Psi_\lambda$ and $\ti{\Psi}_\lambda$ are well-defined sesquiholomorphic, positive-definite, G-invariant kernels. The perikernels are holomorphic in the cut domain of $\dS \times \dS$ of the form $\dS \times \dS \setminus \Sigma$ where $\Sigma$ is the set of tuples $(x,y)$ with $[x-y,x-y] \leq 0$. In the paper \cite{OS23}, we showed that the boundary values of the kernels $\Psi_{\lambda}$ and $\ti{\Psi}_\lambda$ are real analytic on the cut domain $\dS \times \dS \setminus \{(x,y): [x-y,x-y] =0 \}$ and have jump discontinuities along the cut. The cut is where the distributions have singularities, we studied these singularities in terms of analytic wavefront sets. 

The wavefront set of a distribution was introduced by L. H\"ormander in 1970. For a distribution $\Theta$ on a real analytic manifold,
the {\it analytic wavefront set} $WF_A(\Theta)$  describes the set of points  where $\Theta$ is not given by a real-analytic function and the direction in which the singularity occurs (see \cite{H90}).  The wavefront set is a crucial concept in quantum field theory(QFT). One of the initial papers using wavefront sets in QFT was \cite{Di79}. Later the wavefront set was brought into the context of Hadarmard distributions in \cite{RM}. It was shown that the Hadamard condition of a two point distribution of a quasi-free quantum field is equivalent to a condition on its wavefront set. In algebraic quantum field theory, the condition on the wavefront set of the states of quantum fields is related to the Reeh-Schlieder property (see \cite{ SVW02, V99}). It was also conceptualized in the context of unitary representations of Lie groups in \cite{HR} and studied for induced representations in \cite{HHO}.

In this article we will collect the main theorems of \cite{OS23} and their connections to representation theory(\cite{D09}) and reflection positivity (\cite{NO20}). The geometrical setup is given in Section 2. The connection with reflection positivity on the sphere is established in Section 3 and Section 4 explains the representation theoretic perspective. In Section 5 and 6, the spherical distributions and their analytic wavefront sets are reviewed.

\subsection*{Notations:} We use the following notations throughout the article:
\begin{itemize}
     \item $\mr{G} = \mr{SO}_{1,n}(\R{})_e$,
    \item $\mr{H} = \mr{SO}_{1,n-1}(\R{})_e\subset G$,
     \item  $\mr{K} = \mr{SO}_{n}(\R{})\subset G$,
     \item  $[z,w] = -z_0w_0 +z_1w_1 +...+ z_nw_n$ for $z,w \in \C{1+n}$,
     \item $\R{1,n} = (\R{1+n},[\, , \,])$,
     \item $\dS =\{x\in \R{1+n}\mid [x,x]=1\} = G/H$,
 \item $ \dSC = \{z\in \C{1+n}\mid [z,z]=1\}$,
  \item $\mathbb{H}^n = \{x\in i\R{1+n}\mid x_0 > 0, [x,x] = 1\} \simeq G/K\subset \dSC$,
  \item $\oHn= \{x\in i\R{1+n}\mid x_0 < 0,  [x,x] = 1\} \simeq G/K \subset \dSC$,
  \item $\mathbb{S}^n = \{(ix_0,\mathbf{x}) \mid x_0^2 + \mathbf{x}^2 = 1\}$,
  \item $\Sn_{\pm} = \{(ix_0,\mathbf{x}) \in \Sn\mid \pm x_0>0\}$,

   \item $\Gamma^{\pm}(x) = \{ y \in \dS \mid \text{for}\, x \in \dS, [y-x,y-x] < 0, \pm y_0 > x_0 \}$,
     \item $\Gamma(x) = \Gamma^{+}(x)\cup \Gamma^{-}(x)$,
     \item $\rho = (n-1)/2$ for $n \geq 2$.
\end{itemize} 
\section{Geometrical setup} \label{sec:ds}

We are going to follow the geometrical setup described in the paper \cite{OS23}. However, the setups in \cite{NO20, BM96, OS23} are all equivalent.

Let $z,w \in \C{1+n}$ and let the bilinear form on $\C{1+n}$ be given by
\[ [z,w] = -z_0w_0 + \sum_{j=1}^n z_j w_j = -z_0w_0 + \bz\cdot \bw . \]
We say that a vector $v \in \R{1,n}$ is time-like if $[v,v]<0$ and space-like if $[v,v]>0$.

\subsection{The hyperboloid and the de Sitter space}
    The {\it hyperbolic spaces} $\Hn$ and $\oHn$  are given as follows:
\[\Hn =\{x\in i\R{1+n}\mid [x,x]=1, x_0>0\}\quad\text{and}\quad \oHn =\{x\in i\R{1+n}\mid [x,x]=1, x_0<0\}.\]
The de Sitter space $\dS$ is described as
\[  \dS = \{x \in \R{1+n}\mid [x,x] = 1 \}.\]
All the spaces defined above are closed submanifolds of the complex manifold 
\[\dS_\bbC =\{z\in \C{1+n}\mid [z,z]=1\}.\]

Let $x \in \dS$, then we denote the future (past) cone of $x$ as $\Gamma^{+}(x) (\Gamma^-(x))$ where
\[\Gamma^{\pm}(x) := \{y\in \dS\mid [y-x,y-x] <0, \pm (y-x)_0 >0\}.\]

For $x\in \dS$ the set $\{y \in \dS\mid [y-x,y-x]=0\}$ is called the light cone of $x$ in $\dS$.


Let $G=\SO(1,n)_e$ be the connected component of identity of the isometry group of $[\cdot,\cdot]$. 
We denote by $K=\SO (n)$ the maximal compact subgroup
\begin{align*}
    K& =\{k\in G\mid g\cdot e_0= e_0\}, \\[2mm]
    A& = \Bigg \{a_t = \begin{pmatrix}
        \cosh t & 0 & \sinh t\\
        0& I_{n-1}&0\\
        \sinh t & 0 & \cosh t
    \end{pmatrix} : t \in \R{}\Bigg \},
\end{align*}
and by
\begin{align*}
    H &= \{h\in G\mid h\cdot e_n = e_n\}= \SO (1,n-1)_e .
\end{align*}

The group $G$ acts transitively on $\Hn$ and $\dS$ and,
\[\Hn = G\cdot ie_0 \simeq G/K\simeq \oHn = G\cdot (-ie_0) \quad\text{and}
\quad \dS = G\cdot e_n \simeq G/H .\]
 
The de Sitter space is a homogenous Lorentzian manifold with local co-ordinates at each point is given from its tangent space. For $x\in \dS$ we have
\[\rT_x(\dS ) =\{y\in \R{1+n}\mid [x,y]=0\}\cong \R{1,n-1} . \]

The following lemma has been proved in \cite[Lemma 6.3]{NO20}.

\begin{lemma} \label{lemma:KAH}
    $G = HAK = KAH$ and,
   $$G/H =  KA.e_n = \dS.$$
\end{lemma}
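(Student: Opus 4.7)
The plan is to reduce everything to the single orbit calculation $\dS = KA\cdot e_n$, from which both decompositions follow formally. First I would recall that by the notation conventions in the paper $H=\{g\in G\mid g\cdot e_n=e_n\}$, so $\dS=G/H=G\cdot e_n$. Thus once I verify that every point of $\dS$ already lies in the smaller orbit $KA\cdot e_n$, any $g\in G$ will satisfy $g\cdot e_n=ka\cdot e_n$ for some $k\in K$, $a\in A$, and then $(ka)^{-1}g$ stabilizes $e_n$, hence lies in $H$. This gives $G=KAH$, and taking inverses (using that $K,A,H$ are subgroups closed under inversion) yields $G=HAK$.

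The main computation is direct. Using the explicit matrix for $a_t$ given in the text, one gets
\begin{equation*}
a_t\cdot e_n=(\sinh t,\,0,\dots,0,\,\cosh t).
\end{equation*}
The subgroup $K=\SO(n)$ fixes the first coordinate and acts on the remaining $n$ coordinates as the standard rotation group, so
\begin{equation*}
K\cdot a_t\cdot e_n=\{(\sinh t,\mathbf{v})\mid \mathbf{v}\in\bbR^n,\ |\mathbf{v}|=\cosh t\}.
\end{equation*}
Given any $x=(x_0,\mathbf{x})\in\dS$, the defining equation $[x,x]=1$ reads $|\mathbf{x}|^2=1+x_0^2$. Choosing the unique $t\in\bbR$ with $\sinh t=x_0$ gives $\cosh t=\sqrt{1+x_0^2}=|\mathbf{x}|$, so there is a rotation $k\in K=\SO(n)$ mapping $(0,\dots,0,\cosh t)$ to $\mathbf{x}$. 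Therefore $x=k\cdot a_t\cdot e_n$, proving $\dS=KA\cdot e_n$.

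The only subtle point is the appeal to $H$ being exactly the full $G$-stabilizer of $e_n$ (rather than just the identity component of it), because otherwise the element $(ka)^{-1}g$ that I produce lies a priori only in $\SO_{1,n-1}(\bbR)\cap G$ and not obviously in the identity component. This is where the time-orientation condition defining $G=\SO_{1,n}(\bbR)_e$ is essential: since $e_n$ is spacelike, the form on $e_n^\perp$ has signature $(1,n-1)$, and preserving the time-orientation on $\bbR^{1,n}$ forces preservation of the time-orientation on $e_n^\perp$, so the stabilizer is automatically connected and equals $\SO_{1,n-1}(\bbR)_e=H$. With that identification in place the decomposition $G=KAH=HAK$ is complete, and the identity $G/H=KA\cdot e_n=\dS$ is just a restatement of what was proved.
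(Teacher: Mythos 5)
Your proof is correct, and it fills in a genuine gap in the sense that this survey does not prove the lemma at all --- it simply cites \cite[Lemma 6.3]{NO20}. Your argument (reduce to the orbit computation $\dS = KA\cdot e_n$ via $\sinh t = x_0$ and transitivity of $\SO(n)$ on spheres, then obtain $G=KAH$ from the orbit--stabilizer relation and $G=HAK$ by inversion, with the correct observation that the full stabilizer of $e_n$ in $G$ is the connected group $\SO(1,n-1)_e$ because time-orientation is carried by $e_n^{\perp}$) is exactly the standard direct computation underlying the cited reference, so nothing further is needed.
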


There exists a unique up to a constant $G$-invariant measure on $\dS$. For more discussions see \cite[p.159]{D09}. Let $\Delta$ be the Laplace-Beltrami operator obtained from the Lorentz metric on $\dS$ induced from its ambient space $(\R{1+n},[\cdot,\cdot])$. It is known from \cite{F79} that the algebra of polynomials in $\Delta$ is the algebra of $G$-invariant differential operators on $\dS$.

\subsection{The Crown $\Xi$, $\oXi$}
The complex crown  $\Xi$ of a non compact Riemannian symmetric space $G/K$ has the property that the eigenfunctions of the algebra of invariant differential
operators extends to $\Xi$.
It was introduced in \cite{AG90}. It was studied by several authors including the articles \cite{GK02a,GK02b,KSt04}  and \cite{GK02b} showed that the non-compactly causal symmetric spaces \cite{HO97}, including the de Sitter space, can be realized
as open orbits in the boundary of the crowns. We will now discuss how the kernel $\Psi_\lambda$ obtained from reflection positivity is connected to representation theory.

Let $h \in \so (1,n)$ be the operator 
\[h(x_0,x_1, \ldots , x_{n-1}, x_n) = (x_n,0, \ldots ,0,x_1).\]

The unit sphere is realized in $i\R{}e_0 + \R{n}$ by
 $\Sn =\{x\in i\R{}e_0 + \R{n}\mid [x,x]=1\}$. Set
 $\Sn_+ = \{ x\in \Sn\mid x_0>0\}$ and $\Sn_- = \{ x\in \Sn\mid x_0 < 0\}.$

The crown domain of $\Hn$ is described as 
\[\Xi =  G\cdot\Sn_+ = G\exp (i(-{\pi}{2},{\pi}{2}) h )\cdot (ie_0) \]
and for $\oHn$ as
\[\oXi = G\cdot\Sn_- = G\exp i(-\pi/2, \pi /2)h \cdot (-ie_0) .\]

 The crown domain $\Xi  \subset \SO_{d+1}(\C{})(ie_0) \simeq G_\C{}/K_\C{} \simeq \dS_{\C{}}$. Similarly, for $\oXi$.

\begin{Remark}The crown domain depends on $(\fg,\fk)$ where $\fg,\fk$ are the Lie algebra of $G$ and $K$, respectively. It does not depend the choice of Lie group G with Lie algebra $\fg$.
\end{Remark}

The crown domains $\Xi (\oXi)$ contains $\Sn_+(\Sn_{-})$ and $\Hn(\oHn)$ as submanifolds. More details on the crown domains can be found in \cite{NO20, OS23}. 

The following statement that the de Sitter space lies on the boundary of $\Xi$ and $\oXi$ was proven in \cite{NO20, OS23}. Hence, the crown domains form a bridge between the Riemannian manifolds $\Hn, \oHn$ and the Lorentzian manifold $\dS$.

\begin{lemma} $\dS=\partial \, \Xi \cap \partial\, \oXi$.
\end{lemma}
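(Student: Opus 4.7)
The plan is to characterize the closures $\cl{\Xi}$ and $\cl{\oXi}$ through the imaginary parts of their points and to use that the closed forward and past light cones in $\R{1,n}$ meet only at the origin. Write $C^{\pm}=\{y\in\R{1,n}\mid [y,y]\le 0,\ \pm y_0\ge 0\}$, so that $C^+\cap C^-=\{0\}$; both cones are closed in $\R{1,n}$ and preserved by the $G$-action (since $G$ consists of real linear isometries of $[\cdot,\cdot]$ that preserve time-orientation).

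The first step is to show $\mathrm{Im}(\Xi)\subseteq\mathrm{int}(C^+)$. A generic $s\in\Sn_+$ has the form $s=(ix_0,\bx)$ with $x_0>0$ and $x_0^2+\bx^2=1$, so $\mathrm{Im}(s)=x_0 e_0$ is strictly future time-like. Applying $g\in G$ and using that $G$ acts by real maps, $\mathrm{Im}(g\cdot s)=x_0\,(g\cdot e_0)\in\mathrm{int}(C^+)$ for every $g\in G$ and $s\in\Sn_+$. By continuity of the imaginary-part map and closedness of $C^+$, this yields $\mathrm{Im}(\cl{\Xi})\subseteq C^+$; the symmetric argument, with $-ie_0$ replacing $ie_0$, gives $\mathrm{Im}(\cl{\oXi})\subseteq C^-$. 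Consequently any $z\in\partial\Xi\cap\partial\oXi\subseteq\cl{\Xi}\cap\cl{\oXi}$ has $\mathrm{Im}(z)\in C^+\cap C^-=\{0\}$, so $z$ is real; combined with the defining condition $z\in\dSC$ this forces $z\in\dS$.

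For the reverse containment, the same imaginary-part computation shows $\Xi\cap\dS=\oXi\cap\dS=\emptyset$, so it suffices to establish $\dS\subseteq\cl{\Xi}\cap\cl{\oXi}$. Note that $e_n\in\cl{\Sn_+}$: take $s_\epsilon=(i\epsilon,0,\ldots,0,\sqrt{1-\epsilon^2})\in\Sn_+$ and let $\epsilon\to 0^+$. Invoking $G$-invariance of $\cl{\Xi}$ together with transitivity $\dS=G\cdot e_n$ (Lemma~\ref{lemma:KAH}), we obtain $\dS=G\cdot e_n\subseteq G\cdot\cl{\Sn_+}\subseteq\cl{\Xi}$, and the analogous argument with $-ie_0$ yields $\dS\subseteq\cl{\oXi}$. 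The subtlest point to articulate is the passage $\mathrm{Im}(\cl{\Xi})\subseteq C^+$: one must allow approximating sequences $g_k\cdot s_k$ with $g_k$ possibly escaping to infinity in $G$, but closedness of $C^+$ in $\R{1,n}$ makes this automatic, since every $\mathrm{Im}(g_k\cdot s_k)$ lies in $\mathrm{int}(C^+)$ and any convergent limit stays in $C^+$.
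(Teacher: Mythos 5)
Your proof is correct. Note that the paper itself gives no argument for this lemma --- it only cites \cite{NO20, OS23} --- so there is no in-text proof to match; but your reasoning is sound and self-contained. The two inclusions are each handled cleanly: the computation $\mathrm{Im}(g\cdot s)=x_0\,(g\cdot e_0)$ for $s=(ix_0,\mathbf{x})\in\mathbb{S}^n_+$ is exactly right, the passage to closures via continuity of $\mathrm{Im}$ and closedness of $C^\pm$ is airtight (including the point about sequences $g_k$ leaving every compact set), and $C^+\cap C^-=\{0\}$ together with $\mathrm{cl}(\Xi)\subseteq \mathrm{dS}^n_{\mathbb{C}}$ forces boundary-intersection points to be real points of $\mathrm{dS}^n_{\mathbb{C}}$, i.e.\ points of $\mathrm{dS}^n$. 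The reverse inclusion via $e_n\in\mathrm{cl}(\mathbb{S}^n_\pm)$, $G$-invariance of the closures, and transitivity of $G$ on $\mathrm{dS}^n$ is also complete, and you correctly separate $\mathrm{cl}(\Xi)$ from $\partial\Xi$ by observing $\Xi\cap\mathrm{dS}^n=\emptyset$. Compared with the route taken in the cited references --- which identify the boundary orbit structure through the description $\Xi=G\exp\bigl(i(-\tfrac{\pi}{2},\tfrac{\pi}{2})h\bigr)\cdot(ie_0)$, explicit limiting curves such as $t\mapsto\exp(\mp ith)e_n$, and quantitative statements like Lemma~\ref{l1} --- your argument is more elementary: it extracts everything from the single observation that the imaginary parts of $\Xi$ and $\overline{\Xi}$ live in opposite open light cones. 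What you lose relative to the references is the finer information (which $G$-orbits make up the rest of $\partial\Xi$, and the tuboid/local-cone description used later for boundary values), but none of that is needed for the equality asserted here. One cosmetic caution: your sign convention (imaginary parts of $\Xi$ future-pointing) should be checked for consistency against the paper's later use of $\exp(-ith)e_n\in\Xi$, where the choice of the generator $h$ fixes the orientation; this does not affect the validity of your proof of the stated set equality.
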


\begin{Remark}
    It was shown in \cite{OS23, BM96} that around each point $x \in \dS$, the crown can be represented locally as a tuboid of the form $U + i\Omega'$ where $U$ is an open set  and $\Omega'$ is a pointed cone in the tangent space of $x$.
\end{Remark}


\section{Reflection Positivity, Kernels $\Psi_\lambda$ and $\ti{\Psi}_\lambda$}

We will now recall reflection positivity on the sphere \cite{NO20}(see also \cite{NO22}), which leads to the
positive definite kernel $\Psi_\lambda$ (with a different
normalization in \cite{NO20}) given for $\lambda \in i[0,\infty) \cup [0,\frac{n-1}{2})$ and $\rho = (n-1)/2$
by
\begin{equation}\label{def:PsiLambda}
\Psi_\lambda (z,w) = {}_2F_1\left( \rho + \lambda, \rho-\lambda ; \frac{n}{2}; 
\frac{1+[z,\overline{w}]}{2}\right), \quad z,w \in \Xi.
\end{equation}

We will also consider the following kernel
\begin{equation}\label{def:PsiLambda}
\ti{\Psi}_\lambda (z,w) = {}_2F_1\left( \rho + \lambda, \rho-\lambda ; \frac{n}{2}; 
\frac{1+[z,\overline{w}]}{2}\right), \quad z,w \in \oXi.
\end{equation}

As both $n$ and $\lambda$ are fixed most of the time, we simplify our notation and write
\[\Fl (z)  = {}_2F_1\left( \rho + \lambda,  \rho-\lambda ; \frac{n}{2}; z\right).\]
Here  ${}_2F_1(a,b;c;z)$ denotes the Gauss hypergeometric function
\[{}_2F_1(a,b;c;z) = \sum_{n=0}^\infty \frac{(a)_n(b)_n}{(c)_n}\frac{z^n}{n!},\]
where $(d)_n = d(d+1)\cdots (d+n-1)$,  $c\not\in -\N_0$ and $|z|<1$. The hypergeometric function ${}_2F_1$ extends to a holomorphic function
on $\bbC\setminus [1,\infty)$ (see \cite{LS}). 

Consider the unit sphere $\Sn$ realized in  $i\R{}e_0 + \R{n}$. Let $\sigma: \Sn \rightarrow \Sn$ be the reflection $\sigma(x_0, {\bf x}) = (-x_0, {\bf x})$.
Let us denote by $\square$ the Laplacian on $\Sn$. We will now consider a distribution $\Phi_\lambda$ on $\Sn \times \Sn$ as follows:
\[\Phi_\lambda(\phi \otimes \overline{\psi}) = \int_{\Sn} \overline{\phi(x)}(-\square + (\rho^2-\lambda^2))^{-1}\psi(x)d\mu(x) \quad \text{for} \quad \phi, \psi \in C_c^{\infty}(\Sn)\]

where $(-\square + (\rho^2-\lambda^2))^{-1}$ is a bounded positive operator on $L^2(\Sn)$ for $\lambda \in i[0, \infty)\cup(0,\rho)$.

It was proven in \cite[Sec 2]{NO20} that the distribution $\Phi_{\lambda}$ is {\it reflection positive } with respect to $(\Sn, \Sn_+, \sigma)$. That is, the distribution $\Phi_\lambda^{\sigma} = \Phi_\lambda \circ (id, \sigma)$ is positive definite on $\Sn_+ \times \Sn_+$. It was also proven that the distribution $\Phi_\lambda^{\sigma}$ is given by the kernel $\Psi_\lambda(x,y)$ for $x,y \in \Sn_+$. For a general Riemannian manifold a similar construction is done to get a reflection positive distribution (see \cite{Di04, JR08}).

This kernel has a natural extension on $\Xi \times \Xi$. Part (1) of the following theorem is in \cite{NO20,NO22}, part (2) can be found in \cite{OS23} and part (3) follows by part (2).
\begin{theorem} Let $\rho = \frac{n-1}{2}$. Then
\begin{itemize}
    \item[\rm (1)] The kernel $\Psi_{\lambda}(z,w)$  is a positive definite 
    $G$ invariant kernel  on $\Xi \times \Xi$   which is holomorphic in first 
variable and anti-holomorphic in the second variable. It is given by 
    $$\Psi_{\lambda}(z,w) = \,
     {}_2F_1\left(\rho+\lambda,\rho-\lambda; \frac{n}{2}; \ltfrac{1+[z, \overline{w}]}{2}\right), \quad  z,w \in \Xi.$$
\item[\rm (2)] Let $z,w\in\Xi$. Then
\[\overline{\Psi_\lambda (z,w)} = \Psi_\lambda( \overline{z},\overline{w}). \]
    \item[\rm (3)]  The kernel $\widetilde{\Psi}_{\lambda}(z,w)$ is
a $G$-invariant positive definite kernel on
 $\oXi \times \oXi$ holomorphic in the first variable and
 anti-holomorphic in the second variable given by:
    $$\ti{\Psi}_{\lambda}(z,w) = \, {}_2F_1\Big(\rho+\lambda,\rho-\lambda; \frac{n}{2}; \ltfrac{1+[{z}, \sigma(w)]}{2}\Big), \quad  z,w \in \oXi.$$
\end{itemize}
   We also have that ${\Psi}_{\lambda}(z,w) = \overline{\ti{\Psi}_{\lambda}(\bar{z},\bar{w})} = \ti{\Psi}_\lambda(\Bar{w},\Bar{z})$. 
\end{theorem}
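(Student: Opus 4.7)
The plan is to deduce part (3) of the theorem from parts (1) and (2) by exploiting that complex conjugation swaps $\Xi$ and $\oXi$, and then derive the two final identities as corollaries.

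First I would establish the geometric observation that $\oXi$ is obtained from $\Xi$ by pointwise complex conjugation. This reduces to the same identity on the building blocks: on $\Sn = \{(ix_0,\mathbf{x})\}$ complex conjugation sends $(ix_0,\mathbf{x})$ to $(-ix_0,\mathbf{x})$, which exchanges $\Sn_+$ and $\Sn_-$; since $G=\SO_{1,n}(\R{})_e$ acts by real matrices, one gets $\overline{G\cdot\Sn_+} = G\cdot\overline{\Sn_+} = G\cdot\Sn_- = \oXi$. Hence every $(z,w)\in\oXi\times\oXi$ can be written as $(\overline{z'},\overline{w'})$ with $(z',w')\in\Xi\times\Xi$.

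Next I would translate the hypergeometric formula across this conjugation. Because $[\cdot,\cdot]$ has real coefficients, $[\overline{z'},w']=\overline{[z',\overline{w'}]}$, so the argument $\zeta:=(1+[z,\overline{w}])/2$ equals $\overline{\zeta'}$ with $\zeta':=(1+[z',\overline{w'}])/2$. Part (1) guarantees $\zeta'\in\bbC\setminus[1,\infty)$, and this cut plane is conjugation-invariant, so $\zeta$ lies there as well and $\Fl(\zeta)$ is well-defined. For $\lambda\in i[0,\infty)\cup[0,\rho)$ the numerator parameters $\rho+\lambda,\rho-\lambda$ form either a real pair or a complex-conjugate pair, and $n/2$ is real, so the termwise identity $\overline{\Fl(\zeta')}=\Fl(\overline{\zeta'})$ holds inside $|z|<1$ and extends by analytic continuation over the cut plane. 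This yields the key relation
\[
\widetilde{\Psi}_\lambda(z,w) \;=\; \Fl(\zeta) \;=\; \Fl(\overline{\zeta'}) \;=\; \overline{\Fl(\zeta')} \;=\; \overline{\Psi_\lambda(\overline{z},\overline{w})}.
\]

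From this single identity the remaining claims of part (3) follow formally. Holomorphicity in $z$ and antiholomorphicity in $w$: the map $z\mapsto\overline{z}$ is antiholomorphic and $\Psi_\lambda$ has the opposite variance in its two arguments, so the outer conjugation restores the correct behavior. $G$-invariance: for $g\in G$ one has $g\overline{z}=\overline{gz}$, hence $\widetilde{\Psi}_\lambda(gz,gw)=\overline{\Psi_\lambda(g\overline{z},g\overline{w})}=\overline{\Psi_\lambda(\overline{z},\overline{w})}=\widetilde{\Psi}_\lambda(z,w)$. Positive-definiteness: for scalars $c_i$ and points $z_i\in\oXi$, the quadratic form $\sum c_i\overline{c_j}\widetilde{\Psi}_\lambda(z_i,z_j)$ is the complex conjugate of $\sum \overline{c_i}c_j\Psi_\lambda(\overline{z_i},\overline{z_j})$, a nonnegative real number by part (1) applied to the coefficients $\overline{c_i}$ and points $\overline{z_i}\in\Xi$.

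Finally, the trailing identities are immediate. Reading the key relation as $\Psi_\lambda(z,w) = \overline{\widetilde{\Psi}_\lambda(\overline{z},\overline{w})}$ for $z,w\in\Xi$ gives the first equality, and $\overline{\widetilde{\Psi}_\lambda(\overline{z},\overline{w})} = \widetilde{\Psi}_\lambda(\overline{w},\overline{z})$ is the Hermitian symmetry $K(u,v)=\overline{K(v,u)}$ enjoyed by any positive-definite kernel, applied to $\widetilde{\Psi}_\lambda$. The main point that requires care is the conjugation rule for $\Fl$: one must confirm the termwise identity for the specified range of $\lambda$ and check that it survives analytic continuation across $\bbC\setminus[1,\infty)$; once that is in place, everything else is bookkeeping around $\oXi = \overline{\Xi}$.
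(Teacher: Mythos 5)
Your proposal is correct and follows essentially the same route as the paper, which takes parts (1) and (2) as established in the cited references and notes that part (3) ``follows by part (2)'' via the identification $\oXi=\overline{\Xi}$ --- exactly the conjugation argument you spell out, including the Schwarz-reflection step $\overline{\Fl(\zeta)}=\Fl(\overline{\zeta})$ coming from the real (or conjugate-paired) parameters $\rho\pm\lambda$. The only discrepancy is notational: the theorem writes $[z,\sigma(w)]$ in part (3) while your argument (consistent with the paper's own displayed definition of $\ti{\Psi}_\lambda$ and with antiholomorphy in $w$) uses $[z,\overline{w}]$; these agree on $\Sn_-$ but not on all of $\oXi$, and your choice is the one compatible with the stated holomorphy properties.
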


The following lemma has been proved in \cite[Lem. 6.4]{NO20}.
\begin{lemma}\label{l1} We have
\[[\dS ,\Xi]\cap \bbR = [\dS , \oXi]
\cap \bbR = (-1,1).\] 
\end{lemma}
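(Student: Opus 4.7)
The plan is to use the $G$-invariance of $[\cdot,\cdot]$ together with the description $\Xi = G\cdot\Sn_+$ (and $\oXi = G\cdot\Sn_-$) to reduce the problem to computing $[\dS,\Sn_\pm]\cap\bbR$, which is then a direct exercise in linear algebra.

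First I would show that $[\dS,\Xi] = [\dS,\Sn_+]$. For any $z\in\Xi$ write $z = g\cdot z_0$ with $g\in G$ and $z_0\in\Sn_+$; then $[x,z]=[g^{-1}x,z_0]$, and since $\dS$ is $G$-invariant, $g^{-1}x$ ranges over all of $\dS$ as $(x,g)$ varies. The reverse inclusion follows from $\Sn_+\subseteq\Xi$. The identity $[\dS,\oXi] = [\dS,\Sn_-]$ is obtained identically.

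Next, parametrize $x = (x_0,\mathbf{x})\in\dS$ (so $-x_0^2 + |\mathbf{x}|^2 = 1$) and $z_0 = (iy_0,\mathbf{y})\in\Sn_+$ (so $y_0>0$ and $y_0^2+|\mathbf{y}|^2 = 1$). Then $[x,z_0] = -ix_0y_0 + \mathbf{x}\cdot\mathbf{y}$, so the reality condition forces $x_0 y_0 = 0$; since $y_0>0$ this gives $x_0=0$. Now $|\mathbf{x}|=1$ and $|\mathbf{y}| = \sqrt{1-y_0^2}<1$, so Cauchy--Schwarz yields $|[x,z_0]|\leq|\mathbf{y}|<1$, proving $[\dS,\Xi]\cap\bbR \subseteq (-1,1)$.

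For the reverse inclusion, given any $r\in(-1,1)$ and any unit vector $\mathbf{u}\in\R{n}$, set $x=(0,\mathbf{u})\in\dS$ and $z_0 = (i\sqrt{1-r^2},\,r\mathbf{u})\in\Sn_+$; then $[x,z_0]=r$. The case of $\oXi$ is the same with $y_0<0$, replacing $z_0$ by $(-i\sqrt{1-r^2},\,r\mathbf{u})\in\Sn_-$. No step appears substantial; the only point requiring care is the $G$-equivariance reduction, which relies on $G\cdot\Sn_+$ being all of $\Xi$ as given in the definition, so that every element of $\Xi$ can be translated back into $\Sn_+$ without leaving $\dS$ on the other side.
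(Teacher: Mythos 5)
Your proof is correct. Note that the paper itself does not prove this lemma but simply cites [N\'O20, Lem.~6.4]; your argument --- using $G$-invariance of $[\cdot,\cdot]$ and $\Xi = G\cdot\Sn_{\pm}$ to reduce to the slice $\Sn_{\pm}$, where the reality condition forces $x_0=0$ and Cauchy--Schwarz plus an explicit choice of $(x,z_0)$ give exactly $(-1,1)$ --- is essentially the standard computation behind that reference, and it is a clean, self-contained substitute for the citation.
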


From this and the properties of the hypergeometric function we get:

\begin{proposition}\label{cont}
 The kernel $\Psi_{\lambda}$ can be extended continuously to $\Xi \times (\dS\cup\Xi) $ and the kernel $
 \oPsi_{\lambda}$ can be extended continuously to $\oXi \times( \dS\cup\oXi)$.
\end{proposition}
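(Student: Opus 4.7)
The plan is to reduce the continuity statement to the continuity of the hypergeometric function composed with the (anti)holomorphic map $(z,w)\mapsto (1+[z,\bar w])/2$. Write $F_\lambda(\zeta):={}_2F_1(\rho+\lambda,\rho-\lambda;n/2;\zeta)$ and $q(z,w):=(1+[z,\bar w])/2$, so that both $\Psi_\lambda$ and $\widetilde{\Psi}_\lambda$ are given by $F_\lambda\circ q$ on their original domains. The map $q$ is defined and continuous (indeed holomorphic in $z$, antiholomorphic in $w$) on all of $\dSC\times\dSC$, and the paper has already recalled that $F_\lambda$ extends to a holomorphic, hence continuous, function on $\bbC\setminus[1,\infty)$. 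So it is enough to show that for $z\in\Xi$ and $w\in\dS\cup\Xi$ (respectively $z\in\oXi$ and $w\in\dS\cup\oXi$), the value $[z,\bar w]$ never lies in $[1,\infty)$.

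First I would split the two cases. If $w\in\Xi$, the kernel $\Psi_\lambda$ is already well-defined and holomorphic on $\Xi\times\Xi$ by Theorem~(1), which forces $q(z,w)\in\bbC\setminus[1,\infty)$. If $w\in\dS$, then $w\in\R{1+n}$, so $\bar w=w$ and $[z,\bar w]=[z,w]\in[\Xi,\dS]$. Lemma~\ref{l1} says $[\dS,\Xi]\cap\bbR=(-1,1)$, so $[z,w]$ is either non-real or lies in $(-1,1)$; in either case it avoids $[1,\infty)$. Combining the two cases, $q$ maps $\Xi\times(\dS\cup\Xi)$ into $\bbC\setminus[1,\infty)$, and composition with the continuous $F_\lambda$ yields a continuous extension of $\Psi_\lambda$.

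The argument for $\widetilde{\Psi}_\lambda$ is identical, using the second equality $[\dS,\oXi]\cap\bbR=(-1,1)$ from Lemma~\ref{l1} to handle the case $w\in\dS$, together with the corresponding statement of Theorem~(3) for $z,w\in\oXi$. To conclude continuity at a boundary point $(z_0,w_0)\in\Xi\times\dS$, I would simply note that $q$ is continuous as a map $\Xi\times(\dS\cup\Xi)\to\bbC\setminus[1,\infty)$ and that $F_\lambda$ is continuous on the target.

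The only real obstacle is confirming that $q$ genuinely takes values in $\bbC\setminus[1,\infty)$ near the boundary points — in particular, that no sequence $(z_n,w_n)\in\Xi\times\Xi$ approaching the de Sitter boundary can push $q(z_n,w_n)$ into $[1,\infty)$. This is settled the moment one invokes Lemma~\ref{l1}, since that lemma is exactly the geometric input describing the image of $[\cdot,\cdot]$ on the relevant products, and the remaining work is then formal.
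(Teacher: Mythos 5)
Your proposal is correct and follows exactly the route the paper intends: the paper derives Proposition~\ref{cont} directly from Lemma~\ref{l1} together with the holomorphic extension of ${}_2F_1$ to $\bbC\setminus[1,\infty)$, which is precisely your reduction via $q(z,w)=(1+[z,\overline{w}])/2$. Your write-up merely makes explicit the case split ($w\in\Xi$ versus $w\in\dS$, where $\overline{w}=w$ and Lemma~\ref{l1} places $[z,w]$ outside $[1,\infty)$) that the paper leaves implicit.
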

For $y \in \dS$ and $z = e_n$, we have that $(1 + [z,y])/2 \notin [1, \infty)$ iff $y_n < 1$. In particular for $y \in\dS$ we have that
$y \mapsto \Psi_\lambda (y,e_n), \Psi_\lambda (e_n,y), \oPsi_\lambda (y,e_n),\oPsi (e_n,y)$ are analytic on $\{y \in \dS \mid y_n< 1\}$. We will discuss these singularities in \cref{sec:bv} and \cref{sec:ws}.


\section{Representation Theory Perspective}\label{sec: rep} 

We will now discuss how the kernel $\Psi_\lambda$ obtained by reflection positivity is related to representations.
We have the Iwasawa decomposition $G = KAN$. We write $g = k(g) a(g) n(g)$ and $G$ acts on $\rS^{n-1}$ by 
$g\cdot v= k(g)v$.  The principal series representation
 $\pi_\lambda$  with spectral
parameter $\lambda$ acting on the Hilbert space $\rH_\lambda=L^2(\rS^{n-1})$ is given by
\[\pi_{\lambda }(g) f(v) = a(g^{-1}k)^{-\lambda - \rho} f(g^{-1}\cdot v)\]
where  
$v \in\rS^{n-1}$,  $g\in G$  and  $f\in L^2(\rS^{n-1})$.
The constant function $e_\lambda (v) = 1$ is $K$-invariant with norm $1$ and
the associated spherical function is
\[\phi_\lambda (g) = \ip{\pi_\lambda (g)e_\lambda}{e_\lambda} =
\int_{\rS^n} a(g^{-1}v)^{-\lambda - \rho}dv .\] 
We note that $g\mapsto \pi_\lambda (g)e_\lambda$ is right $K$-invariant, hence
$\pi_\lambda (z)e_\lambda$ is well defined for $z$ and $w$ in $\Hn$ and  the kernel $\Psi_\lambda$ is given by
\[\Psi_\lambda (z,w) = \ip{\pi_\lambda (z)e_\lambda}{\pi_\lambda (w)e_\lambda}.\]
Therefore,
\[\phi_\lambda (x) =
\Psi_\lambda (x,ie_0)=  {}_2F_1 \left( \rho + \lambda, \rho -\lambda ; \frac{n}{2}; 
\frac{1+ix_0}{2}\right),\quad ix\in\Hn,\]
is an eigenfunction of the algebra of $G$-invariant differential operators on $\Hn$ (see \cite{D09}).


We have that $\exp (-ith)e_n$ is an element in $\Xi$. It was proven in \cite{GKO04}, \cite[Thm. 2.1]{vdBD88} that for $\phi \in C_c^{\infty}(\dS)$ the limit
\begin{equation}\label{eq:distlim}
    \Theta^\lambda (\varphi) = \lim_{t \to \pi/2^-}  \int_{\dS} \overline{\varphi (y)} 
\Psi_{\lambda}(\exp (-ith)e_n,y)d\mu_{\dS}(y),
\end{equation}

defines an $H$-invariant distribution (see also \cite[Thm.1]{BD92}) using representation theoretic techniques. Another proof using Hardy space approximation was given in \cite{NO18} and a different proof was given in \cite{FNO23}. A third proof independent of representation theory is dealt in \cite{OS23} which we will see in the next section. 

Furthermore, we also obtain that
\begin{equation}\label{eq:spherical}
  \Delta \Theta^\lambda = ( \rho^2 - \lambda^2)\Theta^\lambda . 
\end{equation}

Similar discussion holds for $\oXi$, $\oHn$ and $\ti{\Psi}_\lambda$.
 

\section{ Spherical Distributions as Boundary values of holomorphic functions}\label{sec:bv}

We use the usual notation 
$\mathcal{D}(\dS) = C_c^{\infty}(\dS)$ with the
standard topology. We consider the distributions as anti-linear functionals on $\mathcal{D}(\dS)$.  From now onwards we will denote the elements in $\oXi$ as $\bar{z}$, since $\overline{\bar{z}}=z$ lies in $\Xi$.

Let $\Theta$ be a distribution on $\dS$. Then $G$ acts on $\Theta$ by 
\[\pi_{-\infty}(g)\Theta (\varphi) = \Theta(\pi_{\infty}(g^{-1})\varphi), \quad \varphi \in \mathcal{D}(\dS),\]
where $\pi_{\infty}(g)\varphi(x) = \varphi(g^{-1}\cdot x)$.
\begin{definition}
Let $H$ be a closed subgroup of $G$. We say that a distribution $\Theta$ is $H$-invariant if $\pi_{-\infty}(h)\Theta = \Theta$ for all $h \in H = G_{e_n}$.
\end{definition}
\begin{definition}
    A distribution $\Theta$ is said to be a spherical distribution if it is an $H$-invariant  eigendistribution of the Laplace-Beltrami operator $\Delta$.
\end{definition}

Let  $G' = O_{1,n}(\R{})$ be the full Lorentz group and $H' = O_{1,n-1}(\R{})$ be its subgroup which fixes $e_n$. Then $G'/H'$ is isomorphic to $\dS$. It was proven in \cite{D09}[Thm.9.2.5] that the dimension of $H'-$invariant spherical distributions with eigenvalue $\rho^2-\lambda^2$ is two. The analytic wavefront sets and the characterization of an $H'$-invariant spherical distribution based on its singularities have been studied in \cite{OS23}. However, in the case of $(G,H)$, it is a hypothesis that the dimension of $H$-invariant spherical distribution is four based on \cite{OSe80}. Two of those are distributions that are constructed as boundary values of analytic functions in this section. Others will be dealt in another paper.

From \cref{eq:spherical}, we observe that the distribution $\Theta^\lambda$ is a spherical distribution with eigenvalue $(\rho^2-\lambda^2)$.
Let $\mathcal{D}_{\lambda}^{H}(\dS)$ be the space of spherical distributions on the de Sitter space with $\Delta (\Theta) = (\rho^2-\lambda^2) \Theta$.   

Before understanding the limit, we will need the following theorem \cite{OS23}.

\begin{theorem} \label{thm:hgf}
The limit $\lim_{y \rightarrow 0} \; {}_2F_1(\rho + \lambda, \rho - \lambda, n/2, x \pm iy)$ for $y >0$ exists in the sense of distributions where for $\rm{Re}(z) > 1$ the limit converges uniformly on compact sets. We have that for $1<x<2$, if $n$ is odd 
\begin{small}

\begin{equation}\label{eq:2}
    \begin{split}
      &\hgf{x\pm i0} =  \frac{\Gamma(n/2)\Gamma((2-n)/2)}{\Gamma(1/2 + \lambda)\Gamma(1/2-\lambda)} {}_2F_1\left(\rho + \lambda,\rho-\lambda; \frac{n}{2};1-x\right) \\
   &+ e^{\mp i\pi(\frac{2-n}{2})}(x-1)^{\frac{2-n}{2}}  \ltfrac{\Gamma(n/2)\Gamma((n-2)/2)}{\Gamma(\rho + \lambda)\Gamma(\rho-\lambda)} {}_2F_1\left(1/2- \lambda,1/2+\lambda; \frac{4-n}{2};1-x\right)
    \end{split}
  \end{equation} 
  \end{small}
and if $n$ is even $\hgf{x \pm i0} =$
\begin{small}
\begin{equation}\label{eq:3}
\begin{split}
     &  \ltfrac{\Gamma(n/2)}{\Gamma(\rho + \lambda)\Gamma(\rho-\lambda)}\sum_{k=0}^{n/2-2}\ltfrac{(-1)^{k}(n/2-k-2)!(1/2+\lambda)_k(1/2-\lambda)_k}{k!}(1-x)^{k+1-\frac{n}{2} }\\
    &+ \ltfrac{(-1)^{\frac{n-2}{2}}\Gamma(n/2)}{\Gamma(1/2 + \lambda)\Gamma(1/2-\lambda)}\sum_{k=0}^{\infty}\ltfrac{(\rho+\lambda)_k(\rho-\lambda)_k}{k!(n/2 - 1 + k)!}\Bigg[\psi(k+1) + \psi(n/2 + k) \\
   & - \psi(\rho+\lambda +k) - \psi(\rho-\lambda +k) - \ln (x-1) \pm i\pi\Bigg](1-x)^{k} ,
\end{split}
\end{equation}
\end{small}
where $\psi(z) = \Gamma'(z)/\Gamma(z)$.
    Furthermore, the behaviour of the hypergeometric function near $z=1$ as distributions is given as follows: for $n = 2$, 
    \begin{equation}
        {}_2F_1(z) \approx  \ltfrac{1}{\Gamma(\rho+\lambda)\Gamma(\rho-\lambda)} (-\ln{(1-z}))
    \end{equation}
    and for $n \geq 3$,
    \begin{equation}
        \hgf{z} \approx \frac{\Gamma(n/2)\Gamma((n-2)/2)}{\Gamma(\rho+\lambda)\Gamma(\rho-\lambda)}(1-z)^{\frac{2-n}{2}}. \label{4}
    \end{equation}
\end{theorem}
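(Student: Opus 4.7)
The plan is to rewrite the hypergeometric series, which converges only for $|z|<1$, around the branch point $z=1$ via the standard Kummer connection formula, and then read off the boundary values on the cut $[1,\infty)$ from the explicit branches of $(1-z)^{c-a-b}$ and $\log(1-z)$. With $a=\rho+\lambda$, $b=\rho-\lambda$, $c=n/2$ we compute $c-a-b=(2-n)/2$, $c-a=\tfrac12-\lambda$, $c-b=\tfrac12+\lambda$ and $a+b-c=(n-2)/2$, so the exponent that controls the singular behavior near $1$ and the logarithmic/non-logarithmic dichotomy are both governed by the parity of $n$.

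For $n$ odd, $c-a-b\notin\mathbb{Z}$, so I would apply the classical connection formula
\[
\hgf{a,b;c;z}=\tfrac{\Gamma(c)\Gamma(c-a-b)}{\Gamma(c-a)\Gamma(c-b)}\hgf{a,b;a+b-c+1;1-z}+\tfrac{\Gamma(c)\Gamma(a+b-c)}{\Gamma(a)\Gamma(b)}(1-z)^{c-a-b}\hgf{c-a,c-b;c-a-b+1;1-z},
\]
both factors of which are analytic at $z=1$. The only multivalued ingredient is $(1-z)^{(2-n)/2}$; for $z=x\pm i0$ with $x>1$ one has $1-z=-(x-1)\mp i0=(x-1)e^{\mp i\pi}$, so $(1-z)^{(2-n)/2}=e^{\mp i\pi(2-n)/2}(x-1)^{(2-n)/2}$. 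Substituting this into the connection formula yields precisely (5.2); since the ${}_2F_1$ factors on the right are holomorphic on a disc around $1$, convergence to this boundary value is uniform on compact subsets of $\{\operatorname{Re} z>1\}$.

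For $n$ even, $c-a-b=(2-n)/2$ is a non-positive integer and the formula above degenerates (both Gamma factors have poles that must be cancelled against each other). I would instead use the logarithmic version of the connection formula: setting $m=(n-2)/2$ and expanding in $1-z$ produces a finite singular part $\sum_{k=0}^{m-1}c_k(1-z)^{k-m}$ and a convergent part $\sum_{k\ge 0}d_k(1-z)^k\bigl[\psi\text{-terms}-\log(1-z)\bigr]$; the coefficients $c_k$ and $d_k$ are exactly those appearing in (5.3), obtained via the limit $c-a-b\to-m$ applied to the odd-$n$ formula (a l'Hôpital argument in the parameter that makes the $\psi$-combinations appear). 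To obtain the boundary values one takes $\log(1-z)=\log(-(x-1)\mp i0)=\log(x-1)\mp i\pi$; the finite singular part contributes real terms while the $\mp i\pi$ from the log is the only imaginary contribution, producing (5.3). The principal obstacle is bookkeeping this limit carefully so that the poles cancel and the $\psi$-combination emerges in closed form; this is where the detailed work lies.

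Finally, the leading behavior near $z=1$ is read off from whichever of the two formulas is relevant. For $n\ge 3$ the term $(1-z)^{(2-n)/2}$ dominates with coefficient $\Gamma(n/2)\Gamma((n-2)/2)/(\Gamma(\rho+\lambda)\Gamma(\rho-\lambda))$, giving (5.4). For $n=2$ the power $(1-z)^{0}$ is absent as a singular term and only the logarithmic contribution survives; extracting the $k=0$ piece of the even-$n$ expansion yields the claimed $-\log(1-z)/(\Gamma(\rho+\lambda)\Gamma(\rho-\lambda))$ leading term. Because these leading singularities are locally integrable ($(1-x)^{(2-n)/2}$ is integrable on $[1,1+\varepsilon]$ for every $n$ and $\log(1-x)$ trivially so), the limit $y\to 0^+$ of $\hgf{x\pm iy}$ against a test function exists by dominated convergence applied on compact subsets of $\operatorname{Re} z>1$ together with the uniform integrable bound near $z=1$, giving the distributional convergence globally.
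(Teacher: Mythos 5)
Your derivation of \eqref{eq:2} and \eqref{eq:3} is the standard and correct route, and it is essentially the one used in \cite{OS23} (this survey states the theorem without proof and defers to that reference): apply the Kummer connection formula at $z=1$ in the generic case $c-a-b=(2-n)/2\notin\mathbb{Z}$ ($n$ odd) and its degenerate logarithmic version when $c-a-b$ is a non-positive integer ($n$ even). The parameter bookkeeping $c-a=\tfrac12-\lambda$, $c-b=\tfrac12+\lambda$, $a+b-c=\tfrac{n-2}{2}$, the identity $\Gamma(m)/(1-m)_k=(-1)^k(n/2-k-2)!$ producing the finite sum in \eqref{eq:3}, the branch choice $1-z=(x-1)e^{\mp i\pi}$ on the two sides of the cut, and the leading asymptotics \eqref{4} all check out.

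The genuine gap is in your final step establishing existence of the distributional limit. You claim that $(1-x)^{(2-n)/2}$ is integrable on $[1,1+\varepsilon]$ ``for every $n$,'' but for $n\ge 4$ the exponent satisfies $(2-n)/2\le -1$, so this function is \emph{not} locally integrable (already for $n=4$ it is $(x-1)^{-1}$, and for larger $n$ it is worse). Consequently the dominated convergence argument fails, and for $n\ge 4$ the boundary value cannot be an ordinary function near $x=1$; the right-hand sides of \eqref{eq:2} and \eqref{eq:3} must be interpreted as the Gelfand--Shilov distributions $(x-1\mp i0)^{(2-n)/2}$ and $\ln(x-1\mp i0)$, defined by analytic continuation in the exponent rather than by local integrability \cite{GS64}. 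The clean way to close the gap is the mechanism the survey itself records as \cref{bd}: the connection formula gives the tempered bound
\[
|{}_2F_1(x+iy)|\;\le\; C\,|1-z|^{-\frac{n-2}{2}}\bigl(1+\bigl|\log|1-z|\bigr|\bigr)\;\le\; C'\,|y|^{-N},\qquad N>\tfrac{n-2}{2},
\]
since $|1-z|\ge |\mathrm{Im}\,z|$, and H\"ormander's theorem on boundary values of holomorphic functions of tempered growth (Thm.~3.1.15 in \cite{H90}) then yields the existence of $\lim_{y\to 0^+}{}_2F_1(x\pm iy)$ as a distribution of finite order. Your argument as written is complete only for $n=2,3$, where the singular part happens to be locally integrable.
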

For $x > 2$, ${}_2F_1(x \pm i0)$ can be analytically continued.

In \cite{OS23} we have proven that
\begin{theorem}\label{thm:psi}
 For $n \geq 2$ and $\lambda \in i[0,\infty) \cup (0,\rho)$,
\begin{enumerate}
    \item the limits $\lim_{t \rightarrow \pi/2}\Psi_\lambda({\rm exp}(-ith)e_n,y)$ in $\Xi$ and $\lim_{t\rightarrow \pi/2}\ti{\Psi}_\lambda({\rm exp}(ith)e_n,y)$ in $\oXi$ taken in the sense of \cref{eq:distlim} converge to  distributions $\Psi^{\lambda}$ and $\ti{\Psi}^{\lambda}$ respectively,  on $\dS$.\\
    \item   The limits also satisfies the equation $(\Delta - m^2)\Psi^{\lambda} = 0 = (\Delta - m^2)\ti{\Psi}^{\lambda}$. Hence, are spherical distributions.\\
    \item Moreover, $\Psi^{\lambda}$ and $\ti{\Psi}^{\lambda}$ can be represented as analytic functions in the following regions:
    \begin{align*}
        \Psi^{\lambda}(y) = \begin{cases}
       {}_2F_1\Big(\frac{1+y_n}{2}\Big) &\text{if $y \notin \overline{\Gamma(e_n)}$},\\
       {}_2F_1\Big(\frac{1+y_n}{2} - i0\Big)  &\text{if $y \in \Gamma^+(e_n)$}, \\
    {}_2F_1\Big(\frac{1+y_n}{2} + i0\Big)  &\text{if $y \in \Gamma^-(e_n)$};
          \end{cases}\\[2mm]
        \ti{\Psi}^{\lambda}(y) = \begin{cases}
       {}_2F_1\Big(\frac{1+y_n}{2}\Big) &\text{if $y \notin \overline{\Gamma(e_n)}$},\\
       {}_2F_1\Big(\frac{1+y_n}{2} + i0\Big)  &\text{if $y \in \Gamma^+(e_n)$}, \\
    {}_2F_1\Big(\frac{1+y_n}{2} - i0\Big)  &\text{if $y \in \Gamma^-(e_n)$}.
          \end{cases}  
    \end{align*} 
\end{enumerate}

\end{theorem}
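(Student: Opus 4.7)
The plan is to analyze $\Psi_\lambda(\exp(-ith)e_n, y) = \Fl\bigl((1+[\exp(-ith)e_n,y])/2\bigr)$ explicitly as $t \to \pi/2^-$, decomposing $\dS$ according to whether the complex argument approaches the branch cut $[1, \infty)$ of $\Fl$, and using \cref{thm:hgf} to identify the distributional boundary values near the cut. The corresponding analysis for $\ti{\Psi}^\lambda$ with $-ith$ replaced by $+ith$ is parallel and yields the opposite signs.

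The key quantity is
\[ \zeta_t(y) := \frac{1 + [\exp(-ith) e_n, y]}{2}, \qquad t \in (0,\pi/2),\ y \in \dS. \]
Using the explicit action of $h$, one checks that $\Re\, \zeta_t(y) \to (1+y_n)/2$ as $t \to \pi/2^-$, while $\Im\, \zeta_t(y)$ carries a definite sign dictated by $y_0$, that is, by which component of $\Gamma(e_n)$ the point $y$ lies in. On the open set $\{y_n < 1\} = \dS \setminus \overline{\Gamma(e_n)}$, the argument stays in the domain of analyticity $\bbC \setminus [1,\infty)$ of $\Fl$ uniformly on compacta, so the pointwise limit is $\Fl\bigl(\tfrac{1+y_n}{2}\bigr)$, real analytic in $y$. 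On the singular region $\{y_n > 1\}$ the argument approaches $(1,\infty)$ from above or below the cut according to the sign of $y_0$. \cref{thm:hgf} supplies the distributional boundary values $\Fl(x \pm i0)$ with locally integrable singularities at $x=1$ (of order $(x-1)^{(2-n)/2}$ for $n\geq 3$, logarithmic for $n=2$). Introducing local coordinates adapted to the light cone $\{y_n=1\}$ and writing $\zeta_t(y)$ as a small complex perturbation of $(1+y_n)/2$ with a fixed sign of imaginary part on each of $\Gamma^\pm(e_n)$, one passes to the distributional limit inside \eqref{eq:distlim} by combining dominated convergence on the regular part with the uniform integrability from \cref{thm:hgf}. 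Matching signs on $\Gamma^\pm(e_n)$ with the $\mp i0$ prescription simultaneously establishes (1) and (3).

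For part (2), observe that for each fixed $z \in \Xi$ the kernel $\Psi_\lambda(z,\cdot)$ satisfies the Klein-Gordon equation $(\Delta + \rho^2 - \lambda^2)\Psi_\lambda(z,\cdot) = 0$ on $\dS$, a direct consequence of the hypergeometric differential equation and the form of $\Delta$ on $\dS$ when applied to a function of $[z, \bar{y}\,]$ alone. Since $\Delta$ acts continuously on $\cD'(\dS)$, passing to the distributional limit $t \to \pi/2^-$ preserves the equation. The $H$-invariance of $\Psi^\lambda$ is inherited from the $H$-invariance of each pre-limit function, which follows from $H \cdot e_n = e_n$ and the $G$-invariance of $\Psi_\lambda$.

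The main obstacle is the second half of the previous paragraph: rigorously justifying distributional convergence across the light cone of $e_n$ requires bounding the integrand uniformly in $t$ against test functions supported arbitrarily close to $\{y_n = 1\}$, while correctly tracking the sign of $\Im\, \zeta_t(y)$ in the two components $\Gamma^\pm(e_n)$. The precise asymptotic formulas of \cref{thm:hgf}, which separate the analytic part of $\Fl$ from the $(1-z)^{(2-n)/2}$ or logarithmic singular part, are exactly what makes this step go through.
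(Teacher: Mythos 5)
Your overall strategy --- computing the argument $\zeta_t(y)$ of the hypergeometric function, splitting $\dS$ along the light cone of $e_n$, and invoking \cref{thm:hgf} to identify the one-variable boundary values $\Fl(x\pm i0)$ --- is the route the paper takes. But the mechanism you propose for the only hard step, the distributional convergence across $\{y_n=1\}$, does not work. You assert that the singularities $(x-1)^{(2-n)/2}$ from \cref{thm:hgf} are locally integrable for $n\ge 3$ and then appeal to dominated convergence together with ``uniform integrability.'' For $n\ge 4$ the exponent satisfies $(2-n)/2\le -1$, so $(x-1)^{(2-n)/2}$ is \emph{not} locally integrable (and for even $n\ge 4$ the expansion \eqref{eq:3} likewise begins with $(1-x)^{1-n/2}$); the limit is then not a locally integrable function near the cone, and no dominated-convergence argument can produce it. The tool the paper relies on for this step is \cref{bd}, H\"ormander's tempered boundary-value theorem: one shows that near each point of the light cone the curve approaches $\dS$ inside a local tuboid $U+i\Omega'$ (cf.\ the Remark in Section 2) and that $|\Psi_\lambda(z_t,y)|\le C\,|\operatorname{Im}\zeta_t(y)|^{-N}$, with $N$ read off from \eqref{4}; existence of the limit, its finite order, and control of its wavefront set then follow. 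Your sketch never establishes such a polynomial bound, and that is where the actual work lies. Relatedly, the vertex $e_n$ needs separate care: the function $y\mapsto(1+y_n)/2$ has a critical point at $e_n$ on $\dS$ (its differential vanishes on $T_{e_n}\dS=\{v:v_n=0\}$), so coordinates in which the singularity is a clean one-variable pullback do not exist there.

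Two smaller points. First, your justification of $H$-invariance rests on a false premise: the pre-limit functions $y\mapsto\Psi_\lambda(z_t,y)$ are \emph{not} $H$-invariant for $t<\pi/2$, because $H$ fixes $e_n$ but does not fix $z_t$ (the boost generator $h$ lies in $\fq$ and is not centralized by $H$); only the limit is $H$-invariant, which is most easily read off from the explicit formulas in part (3), since they depend only on $y_n$ and on which component of $\Gamma(e_n)$ contains $y$. Second, be careful with the curve itself: as literally written, $\exp(-ith)e_n\to -ie_0$ as $t\to\pi/2$, so $\operatorname{Re}\zeta_t(y)$ would tend to a constant rather than to $(1+y_n)/2$; your asserted limit is the right one for the theorem, but it requires the curve approaching $e_n$ (e.g.\ $\exp(-ith)(ie_0)\to e_n$), and the sign analysis on $\Gamma^{\pm}(e_n)$ should be rechecked against that corrected formula.
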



\section{Wavefront set of spherical distributions on $\dS$} \label{sec:ws}

We will now briefly introduce the wavefront sets and study these for the distributions $\Psi_\lambda$ and $\ti{\Psi}_\lambda$.

Let $X \subset \R{1,n}$ be an open subset.
Suppose $\Theta$ is a distribution with compact support then we can define the Fourier transform of $\Theta$ at $\xi\in (\R{1,n}\setminus 0)$ as follows:
$$\widehat{\Theta}(\xi) = \Theta(e^{2\pi i [x,\xi]}),$$
where $[x,\xi] = -x_0\xi_0 + x_1\xi_1+...+x_{n}\xi_{n}$.

 The definition of analytic wavefront set follows from \cite[Proposition 8.4.2]{H90}. 

\begin{definition}
    If $X$ is an open subset of $\R{1,n}$ and $\Theta \in \mathcal{D}'(X)$, we define $WF_A(\Theta)$ to be the complement in $X \times (\R{1,n}\setminus 0)$ of the set $(x_0,\xi_0)$ such that there is an open neighbourhood $U \subset X$ of $x_0$, a conic neighbourhood $\Gamma$ of $\xi_0$ and a bounded sequence of $\Theta_N \in \mathcal{E}'(X)$ which is equal to $\Theta$ in $U$ and satisfies 
$$|\widehat{\Theta_N}(\xi)| \leq C^{N+1}(N/|\xi|)^N \quad N=1,2,...$$
when $\xi \in \Gamma$ and for some $C>0$.
\end{definition}

\begin{Remark}
    The $WF_A(\Theta)$ is a closed conic set, that is if $(x,\xi) \in WF(\Theta)$, then for $\tau > 0$, $(x,\tau \xi) \in WF(\Theta)$.
\end{Remark}

The next theorem describes the analytic wavefront sets of distributions which are boundary values of analytic functions. Let $\Gamma$ be an open convex cone, then the {\it dual cone} $\Gamma^{\circ}$ is defined as 
$$\Gamma^{\circ} = \{ \eta \in \R{1+n}\mid \eta_0\xi_0 + ...+ \eta_n\xi_n \geq 0, \;\forall \xi \in \Gamma\}.$$
\begin{theorem}\label{bd}
Let $X \subset \R{1,n}$ be an open set and $\Gamma$ an open convex cone in $\R{1,n}$ and for some $\gamma > 0 $,
$$Z = \{ z \in \C{1+n} \mid \rm{Re}\, z \in X, \rm{Im}\, z \in \Gamma, |\rm{Im} z| < \gamma\}.$$
If $\Theta$ is an analytic function in $Z$ such that 
$$|\Theta(z)| \leq C |\rm{Im} \,z|^{-N}$$
for some $N \in \mathbb{N}$ and some constant $C >0$, the  $\underset{y\searrow 0}{\rm{lim}}\Theta(.+iy) = \Theta_0$ exists in terms of distribution and is of order $N$.  We also have that 
$$ WF_A (\Theta_0) \subset X \times (\Gamma^{\circ} \setminus 0).$$
\end{theorem}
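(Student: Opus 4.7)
The plan is to prove the two claims separately, following H\"ormander's classical framework for boundary values of holomorphic functions (see Theorems 3.1.15 and 8.4.8 of \cite{H90}). The existence of $\Theta_0$ is obtained by trading $y$-derivatives of $\Theta$ for $x$-derivatives of the test function via the Cauchy--Riemann equations; the wavefront inclusion is obtained by a contour shift in the Fourier transform of a suitably cut off $\Theta_0$.

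For the existence of $\Theta_0$ as a distribution of order $N$, fix $\varphi \in \mathcal{D}(X)$ and set $I(y) = \int \Theta(x+iy)\overline{\varphi(x)}\,dx$ for $y \in \Gamma$ with $|y| < \gamma$. Cauchy's inequalities, applied in polydisks contained in $Z$, upgrade the hypothesis to $|\partial^\alpha \Theta(x+iy)| \leq C_\alpha |y|^{-N-|\alpha|}$ for every multi-index $\alpha$. Since $\Theta$ is holomorphic, the Cauchy--Riemann equations $\partial_{y_j}\Theta = i\partial_{x_j}\Theta$ allow each differentiation of $I$ in $y$ to be transferred onto $\varphi$ via integration by parts. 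Fixing a direction $\eta \in \Gamma$ and setting $F(t) = I(t\eta)$ for $0 < t < \gamma/|\eta|$, a Taylor expansion at some $t_0 > 0$ with integral remainder (together with the derivative transfer) expresses $F(t) - F(t_0)$ as a sum of integrals of $\Theta$ against derivatives of $\varphi$ of order up to $N+1$, all with singularities integrable down to $t = 0$. Hence $\lim_{t \searrow 0}F(t)$ exists and is bounded by $C\sum_{|\alpha|\leq N+1}\|\partial^\alpha\varphi\|_{L^1}$. Convexity of $\Gamma$ together with a Cauchy--Pompeiu argument inside the tube shows the limit is independent of $\eta$, giving a distribution $\Theta_0 \in \mathcal{D}'(X)$ of order at most $N+1$; a refinement using an $N$-fold antiderivative in $t$ before differentiation sharpens the order to $N$.

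For the wavefront inclusion, let $(x_0,\xi_0) \in X \times (\mathbb{R}^{1+n}\setminus 0)$ with $\xi_0 \notin \Gamma^\circ$. By definition of $\Gamma^\circ$ there exists $\eta \in \Gamma$ whose pairing with $\xi_0$ is strictly negative, so there are a constant $c > 0$ and a conic neighbourhood $V$ of $\xi_0$ on which this pairing is $\leq -c|\xi|$. Construct the bounded sequence $\Theta_N = \chi_N \Theta_0$, where $\chi_N \in \mathcal{D}(X)$ is a Bros--Iagolnitzer cutoff equal to $1$ on a fixed open neighbourhood $U$ of $x_0$ and satisfying the pseudo-analytic bounds $\|\partial^\alpha \chi_N\|_\infty \leq (CN)^{|\alpha|}$ for $|\alpha| \leq N$. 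Writing the pairing through the boundary value as
\[
  \widehat{\Theta_N}(\xi) = \lim_{\epsilon \searrow 0}\int \chi_N(x)\Theta(x+i\epsilon\eta)e^{2\pi i[x,\xi]}\,dx,
\]
one deforms the contour by $x \mapsto x + is\eta$ for a parameter $s \in (0,\gamma/|\eta|)$; Stokes' theorem replaces the original integral by the shifted one plus an error with integrand supported in $\mathrm{supp}\,\overline{\partial}\chi_N$. The exponential factor on the shifted contour contributes $|e^{2\pi i[x+is\eta,\xi]}| \leq e^{-2\pi c s|\xi|}$ for $\xi \in V$, while $\Theta$ contributes a factor $s^{-N}$ from the growth bound. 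The cutoff error is controlled by the same estimate with an extra polynomial factor from the derivatives of $\chi_N$, which remains of the same order after the optimal choice $s \sim N/|\xi|$. This optimization balances the exponential decay against the $s^{-N}$ singularity and yields $|\widehat{\Theta_N}(\xi)| \leq C^{N+1}(N/|\xi|)^N$ for $\xi \in V$, showing $(x_0,\xi_0) \notin WF_A(\Theta_0)$.

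The principal obstacle is the contour shift: because $\chi_N$ is not holomorphic, the error arising from $\overline{\partial}\chi_N$ must be controlled to the same asymptotic order as the main term, and the balancing of $s$, $N$, and $|\xi|$ to produce the precise shape $C^{N+1}(N/|\xi|)^N$ is delicate. The Bros--Iagolnitzer cutoff bounds, which provide almost-analytic control of $\chi_N$ up to level $N$, are tailored to this purpose, but combining them with the $|y|^{-N}$ growth of $\Theta$ and the conic neighbourhood estimate requires careful bookkeeping of the constants.
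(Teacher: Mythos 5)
Your proposal is correct and takes the same route as the paper, whose entire proof of this theorem is the citation of Theorems 3.1.15 and 8.4.8 in \cite{H90}; what you have written is a faithful sketch of how those two cited theorems are themselves proved (Cauchy--Riemann transfer of derivatives plus a Taylor remainder to get the existence and order of $\Theta_0$, then almost-analytic cutoffs and a contour shift optimized at $s\sim N/|\xi|$ for the wavefront bound). The only point to watch is the paper's mixed conventions --- the Fourier transform here is taken with the Lorentzian pairing $[x,\xi]$ while $\Gamma^{\circ}$ is defined with the Euclidean one --- so the sign of the pairing that is supposed to produce the decay $e^{-2\pi c s|\xi|}$ on the shifted contour must be checked against those conventions, but this bookkeeping does not change the substance of the argument.
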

\begin{proof}
    The proof follows from Theorem 3.1.15 and Theorem 8.4.8 in \cite{H90}. 
\end{proof}

\begin{example}\label{eg:hgf}
    Let $\Theta = {}_2F_1(x + i0)$, the boundary value of the holomorphic function ${}_2F_1(x +iy)$ for $y>0$. From \cref{thm:hgf} we have that it is a distribution which has an analytic singularity at $x=1$. As a result of \cref{bd}, the analytic wavefront set is $$WF_A({}_2F_1(x + i0)) = \{(1, \tau)\mid \tau > 0\},$$ 
    and it also follows that 
    \begin{equation*}\pushQED{\qed}
       WF_A({}_2F_1(x - i0))= \{(1, \tau)\mid \tau < 0\}.\qedhere  
    \end{equation*}
  \end{example}

The following is the second main theorem of the paper \cite{OS23}. 

\begin{theorem}\label{thm:wf}
    The analytic wavefront sets of the spherical distributions $\Psi^\lambda$ and $\ti{\Psi}^\lambda$ are given by 
  \begin{align*}
      WF_A({\Psi}^{\lambda}) &= 
  \{(e_n,v)\mid v_0 < 0 \} \cup \{(e_n +v, \tau (-v_0, v_1,...,v_{n-1}))\mid v_0>0\} \\ 
  &\cup\{(e_n+v,\tau (v_0, -v_1,...,-v_{n-1}))\mid v_0 <0\} ; \\
   WF_A(\ti{\Psi}^{\lambda}) &= 
  \{(e_n,v)\mid v_0 > 0 \}\cup \{(e_n +v, \tau (v_0, -v_1,...,-v_{n-1}))\mid v_0>0\} \\ 
  &\cup \{(e_n+v,\tau (-v_0, v_1,...,v_{n-1}))\mid v_0 <0\} ,
  \end{align*}
  for $\tau > 0$ and $v \in T_{e_n}(\dS)$ such that $[v-e_n,v-e_n] = 0$. 
  
\end{theorem}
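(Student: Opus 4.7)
My strategy is to analyze $\Psi^{\lambda}$ region by region, using the explicit description from Theorem \ref{thm:psi}(3) as the main input; the argument for $\widetilde{\Psi}^{\lambda}$ is parallel, with time-orientation signs reversed because of the opposite $\pm i0$ prescriptions.

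Off the light cone $\{y\in\dS:y_n=1\}$ through $e_n$, $\Psi^{\lambda}$ is real analytic by Theorem \ref{thm:psi}(3), so $WF_A(\Psi^{\lambda})$ is automatically supported on this cone. For a smooth cone point $y=e_n+v$ with $v\neq 0$, I would apply H\"ormander's pullback theorem for analytic wavefront sets. The map $\phi:\dS\to\bbR$, $\phi(y)=(1+y_n)/2$, has differential $d\phi=\tfrac{1}{2}\,dy_n$, which is nonvanishing on $T_y\dS$ as soon as $y\neq\pm e_n$, so $\phi$ is an analytic submersion there. Theorem \ref{thm:psi}(3) expresses $\Psi^{\lambda}$ locally as $\Fl(\phi(\cdot)\mp i0)$ with sign determined by whether $y\in\Gamma^{+}(e_n)$ or $\Gamma^{-}(e_n)$, and the pullback theorem combined with Example \ref{eg:hgf} then places the wavefront along the conormal $\tau\,d\phi(y)$ with $\tau$ of the appropriate sign. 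Representing $d\phi|_{T_y\dS}$ as a covector in $\bbR^{1,n}$ modulo the annihilator of $T_y\dS$ gives exactly $\tau(-v_0,v_1,\ldots,v_{n-1})$ for $v_0>0$ and $\tau(v_0,-v_1,\ldots,-v_{n-1})$ for $v_0<0$, as claimed.

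The main obstacle is the vertex $e_n$, where $d\phi$ vanishes on $T_{e_n}\dS$ and the pullback argument breaks down. My plan is to invoke the local tuboid representation of the crown: near $e_n$ we have $\Xi\cong U+i\Omega^{+}$ for some convex cone $\Omega^{+}\subset T_{e_n}\dS$ (as recorded in the Remark at the end of Section \ref{sec:ds}), and $\Psi^{\lambda}$ is the distributional boundary value of $\Psi_{\lambda}(\cdot,y)$ from this tuboid. Theorem \ref{bd} then gives the inclusion $WF_A(\Psi^{\lambda})_{e_n}\subset\Omega^{+\circ}\setminus\{0\}$. The approach $\exp(-ith)e_n$ exhibits an $e_0$-ray in $\Omega^{+}$, and the dual of this ray (with respect to the pairing used in Theorem \ref{bd}) is precisely the half-space $\{v_0<0\}$ from the statement. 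To upgrade the inclusion to equality, I would use the leading-order singular expansion $\Fl(z)\sim C(1-z)^{(2-n)/2}$ from Theorem \ref{thm:hgf}: the induced branch-type singularity of $\Psi^{\lambda}$ at $e_n$ has an explicit Fourier profile that fails to decay in any direction of $\{v_0<0\}$, yielding the reverse containment and completing the computation.
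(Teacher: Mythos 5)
Your plan shares its upper-bound half with the paper's argument (local tuboid representation plus \cref{bd}, conormal directions along the smooth part of the light cone), but it drops the second ingredient the paper explicitly relies on: H\"ormander's propagation of analytic singularities \cite{H71} applied to the equation $(\Delta-m^2)\Psi^{\lambda}=0$ from \cref{thm:psi}(2). This omission is not cosmetic, and it is where your vertex argument goes wrong. Microlocal analytic regularity for the operator $\Delta-m^2$ forces $WF_A(\Psi^{\lambda})\subset\mathrm{Char}(\Delta-m^2)$, so the fiber over $e_n$ can contain only \emph{null} covectors (this is also what the side condition on $v$ in the statement of \cref{thm:wf} encodes); it is never an open half-space. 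Correspondingly, your claim that the Fourier profile of the leading singularity ``fails to decay in any direction of $\{v_0<0\}$'' is false: for a singularity of type $(1-z)^{(2-n)/2}$ pulled back through $\phi(y)=(1+y_n)/2$, the transform concentrates along the null cone and decays rapidly in the interior timelike directions, exactly as for the massless two-point function on Minkowski space. The lower bound at the vertex is obtained either by propagation (by \cref{thm:hgf} the distribution is not analytic at $e_n$, so $WF_A$ is nonempty there, and invariance under the null bicharacteristic flow then forces the whole set in the statement into $WF_A$), or more cheaply from closedness of $WF_A$: the normalized conormal covectors you compute at $e_n+v$ converge, as $v\to 0$ along the cone, to exactly the null covectors with $v_0$ of the asserted sign. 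A direct Fourier computation at $e_n$ would additionally have to control all subleading terms of the expansion in \cref{thm:hgf}, which you do not address.

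There is also a gap at the smooth cone points. \cref{thm:psi}(3) identifies $\Psi^{\lambda}$ with analytic functions \emph{pointwise on the open regions} $\Gamma^{\pm}(e_n)$ and the exterior of $\overline{\Gamma(e_n)}$; since for $n\ge 4$ the singularity $(1-z)^{(2-n)/2}$ is not locally integrable, these formulas do not by themselves determine the distribution across the measure-zero cone, so the identity $\Psi^{\lambda}=\phi^{*}\bigl(\Fl(\cdot\mp i0)\bigr)$ \emph{as distributions} in a full neighbourhood of a cone point --- which is what your pullback argument needs, and which could a priori fail by a term supported on the cone --- requires proof. The paper supplies it by exhibiting $\Psi^{\lambda}$ near each such point as the distributional boundary value of a holomorphic function from a local tuboid and then applying \cref{bd} in local coordinates. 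Once that local boundary-value representation is in place, your submersion/pullback computation combined with \cref{eg:hgf} does give the correct conormal directions with the correct signs, so this part of your plan is salvageable; as written, though, the proposal is missing the two steps that actually close the argument.
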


In \cref{fig:wavefront set} we can see the analytic wavefront set in $T_{e_n}\dS$. The proof involves calculating the analytic wavefront sets in local co-ordinates using \cref{bd} and propagation of singularities from \cite{H71}.

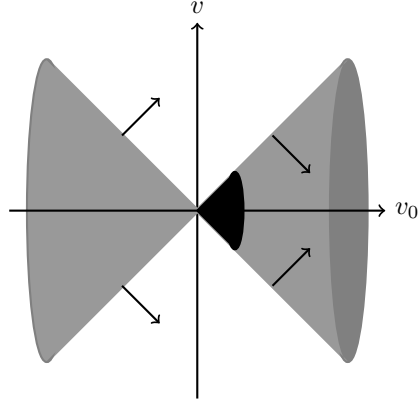
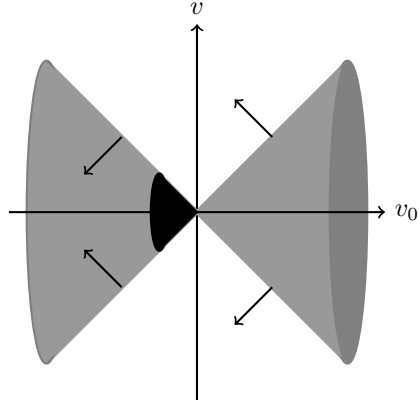
\begin{figure}[!h]
\centering
\begin{subfigure}{0.5\textwidth}
\begin{tikzpicture}[scale = 0.5]
\draw[->,thick] (0,-5)--(0,5) node[above]{$v$};
 \draw[black!40, ultra thick] (-4,-4)--(4,4);
 \draw[black!40, ultra thick] (-4,4)--(4,-4);
 \filldraw[color=black!50, fill = black!50, fill opacity = 0.2,ultra thick] (4,0) ellipse (0.5 and 4) ;
  \filldraw[color=black!50, fill = black!50, fill opacity = 0.2,ultra thick] (-4,0) ellipse (0.5 and 4) ;
 \fill[ color=black!40, opacity=0.2] (4,4)--(0,0)--(4,-4) arc (270: 90:0.5 and 4);
\fill[ color=black!40, opacity=0.2] (-4,4)--(0,0)--(-4,-4) arc (270:90:0.5 and 4);
\fill[color=black!100, opacity=0.2] (1,1)--(0,0)--(1,-1) arc (270:90:0.2 and 1);
 \filldraw[color=black!100, fill = black!150,fill opacity = 0.2,ultra thick] (1,0) ellipse (0.2 and 1) ;
 \draw[->, thick] (-5,0)--(5,0) node[right]{$v_0$};
 \draw[->, black!100, thick] (2,2)--(3,1);
 \draw[->,black!100,thick] (2,-2)--(3,-1);
 \draw[->,black!100,thick] (-2,2)--(-1,3);
 \draw[->,black!100,thick] (-2,-2)--(-1,-3);
 \draw[black!100,thick](0,0)--(1,1);
 \draw[black!100,thick](0,0)--(1,-1);
\end{tikzpicture}
\caption{Analytic wavefront set of $\ti{\Psi}^{\lambda}$.}
\end{subfigure}

\par\bigskip 
\begin{subfigure}{0.5\textwidth}
    \centering
    \begin{tikzpicture}[scale = 0.5]
\draw[->,thick] (0,-5)--(0,5) node[above]{$v$};
 \draw[black!40, ultra thick] (-4,-4)--(4,4);
 \draw[black!40, ultra thick] (-4,4)--(4,-4);
 \filldraw[color=black!50, fill = black!50, fill opacity = 0.2,ultra thick] (4,0) ellipse (0.5 and 4) ;
  \filldraw[color=black!50, fill = black!50, fill opacity = 0.2,ultra thick] (-4,0) ellipse (0.5 and 4) ;
\fill[ color=black!40, opacity=0.2] (4,4)--(0,0)--(4,-4) arc (270: 90:0.5 and 4);
\fill[ color=black!40, opacity=0.2] (-4,4)--(0,0)--(-4,-4) arc (270:90:0.5 and 4);
\fill[color=black!150, opacity=0.2] (-1,1)--(0,0)--(-1,-1) arc (270:90:0.2 and 1);
 \filldraw[color=black!150, fill = black!150,fill opacity = 0.2,ultra thick] (-1,0) ellipse (0.2 and 1) ;
 \draw[->, thick] (-5,0)--(5,0) node[right]{$v_0$};
 \draw[->, black!100, thick] (2,2)--(1,3);
 \draw[->,black!100,thick] (2,-2)--(1,-3);
 \draw[->,black!100,thick] (-2,2)--(-3,1);
 \draw[->,black!100,thick] (-2,-2)--(-3,-1);
 \draw[black!100,thick](0,0)--(-1,1);
  \draw[black!100,thick](0,0)--(-1,-1);
\end{tikzpicture}
\caption{Analytic wavefront set of ${\Psi}^{\lambda}$.}
\end{subfigure}
\caption{In this figure, the tangent space at a point $e_n$ has been identified with its cotangent space at $e_n$. The light grey region, which is the light cone of 0, is the analytic singular support. The arrows and the dark grey cone are the directions in which the singularity occurs at that point.}
\label{fig:wavefront set}
\end{figure}

Using the analytic wavefront sets we can prove that the distributions can not vanish on any non-empty open set $O$ of $\dS$.

The following theorem is due to Strohmaier, Verch and Wollenberg, see \cite[Proposition 5.3]{SVW02}.

\begin{theorem}\label{thm:supp}
    Let X be a real analytic manifold and $\Theta \in \mathcal{D}'(X)$. If $WF_A(\Theta) \cap -WF_A(\Theta) = \emptyset$ then for an open region $O$ in $X$
    $$\Theta|_O= 0 \Rightarrow \Theta=0 ,$$
    where $-WF_A(\Theta) = \{(x,\xi)\mid (x, -\xi) \in WF_A(\Theta)\}$.
\end{theorem}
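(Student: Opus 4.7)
The strategy is an ``open-and-closed'' unique-continuation argument on the set
\[ U = \{x \in X : \Theta \text{ vanishes in some neighborhood of } x\}. \]
By hypothesis $O \subset U$, so $U$ is nonempty, and $U$ is open by construction. The plan is to show $U$ is also closed; then on the connected component of $X$ meeting $O$, $U$ equals that component (the statement implicitly requires $X$ to be connected, or $O$ to meet every component), giving $\Theta = 0$.

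To prove closure, I would argue by contradiction. Suppose $x_0 \in \overline{U}\setminus U$. Then $x_0 \in \mathrm{supp}(\Theta)$, yet every neighborhood of $x_0$ meets $U$. Working in local real-analytic coordinates near $x_0$, choose an open ball $B \subset U$ whose closure contains $x_0$ on its boundary, and let $\Sigma$ be the real-analytic tangent hyperplane to $\partial B$ at $x_0$, with unit conormal covector $\nu \in T^*_{x_0} X \setminus 0$. Then $\Theta$ vanishes on the side of $\Sigma$ containing $B$ in a neighborhood of $x_0$, but not on any full neighborhood of $x_0$.

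The decisive step is now the invocation of Kashiwara's microlocal ``watermelon'' (analytic edge-of-the-wedge) theorem: if a distribution vanishes on one side of a real-analytic hypersurface through $x_0$ but not on any full neighborhood of $x_0$, then both conormal directions $(x_0,\nu)$ and $(x_0,-\nu)$ belong to $WF_A(\Theta)$. Granting this, we immediately obtain $(x_0,\nu) \in WF_A(\Theta)\cap -WF_A(\Theta)$, contradicting the hypothesis. Hence $U$ is closed, and the theorem follows.

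The main obstacle is precisely this microlocal input: converting ``one-sided local vanishing at a boundary point of the support'' into ``both conormals lie in $WF_A$'' is the nontrivial analytic content, while the clopen topology and the choice of $\Sigma$ are routine. An alternative but essentially equivalent route would use the Bros--Iagolnitzer characterization: the hypothesis $WF_A(\Theta)\cap -WF_A(\Theta) = \emptyset$ realizes $\Theta$ locally as the boundary value of a function holomorphic in a one-sided wedge, and unique continuation for holomorphic functions then delivers the conclusion. Either way, one is reducing a real statement about supports to a microlocal/complex-analytic unique-continuation principle.
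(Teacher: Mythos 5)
The paper itself gives no proof of this theorem: it is quoted from Strohmaier--Verch--Wollenberg \cite[Prop.~5.3]{SVW02}, and the argument there is essentially the one you propose --- an open--closed argument on $U = X\setminus\mathrm{supp}(\Theta)$ whose sole analytic content is H\"ormander's Theorem 8.5.6$'$ (Kashiwara's ``watermelon'' theorem), which places \emph{both} conormal directions of a supporting hypersurface into $WF_A(\Theta)$. So your strategy is the right one and matches the cited source; you have correctly isolated where the hypothesis $WF_A(\Theta)\cap -WF_A(\Theta)=\emptyset$ enters, and the connectedness caveat you raise is a fair one (it is implicit in \cite{SVW02}).

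Two steps need repair before the watermelon theorem can legitimately be invoked. First, for a prescribed $x_0\in\overline{U}\setminus U$ there need not exist a ball $B\subset U$ with $x_0\in\partial B$ (the support can have an inward cusp at $x_0$); the standard fix is not to insist on that particular $x_0$: in a chart, pick $y\in U$ close to $\mathrm{supp}(\Theta)$, set $r=\mathrm{dist}\bigl(y,\mathrm{supp}(\Theta)\bigr)$, and work at a point $x_1\in\partial B(y,r)\cap\mathrm{supp}(\Theta)$, which always exists and still yields the desired contradiction. Second, you cannot replace $\partial B$ by its tangent hyperplane $\Sigma$: $\Theta$ is only known to vanish on $B$, not on a full one-sided neighborhood of $\Sigma$, and the lune between $\partial B$ and $\Sigma$ may meet $\mathrm{supp}(\Theta)$ arbitrarily close to the touching point, so the claim that $\Theta$ vanishes on the side of $\Sigma$ containing $B$ near the touching point is false in general. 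Apply the theorem directly to the sphere: with $f(x)=r^2-|x-y|^2$ one has $\mathrm{supp}(\Theta)\subset\{f\le 0\}$, $f(x_1)=0$ and $df(x_1)\neq 0$, whence $(x_1,\pm\, df(x_1))\in WF_A(\Theta)$, contradicting the hypothesis. With these adjustments (and connectedness of $X$, or the requirement that $O$ meet every component) the proof is complete.
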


Since the wavefront sets of the distributions $\Psi^{\lambda}$ is such that $WF_A(\Psi^{\lambda}) \cap -WF_A(\Psi^{\lambda}) = \emptyset$,   which is same for $\ti{\Psi}^{\lambda}$, immediately we obtain:

\begin{corollary}
The distributions $\Psi^{\lambda}$ and $\ti{\Psi}^{\lambda}$ can not vanish on any open regions of $\dS$.
\end{corollary}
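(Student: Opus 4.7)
The plan is to reduce the statement to a direct application of Theorem \ref{thm:supp}. Since Theorem \ref{thm:supp} yields the implication $\Theta|_O = 0 \Rightarrow \Theta = 0$ under the hypothesis $WF_A(\Theta) \cap -WF_A(\Theta) = \emptyset$, the contrapositive says that a nonzero distribution satisfying this condition cannot vanish on any nonempty open set. So two things need checking: first, that $\Psi^\lambda$ (resp.\ $\ti\Psi^\lambda$) is not identically zero, and second, that its analytic wavefront set is disjoint from its negative.

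Nonvanishing is immediate from Theorem \ref{thm:psi}(3): on the region $\{y \in \dS : y \notin \overline{\Gamma(e_n)}\}$ the distribution $\Psi^\lambda$ agrees with the analytic function $y \mapsto {}_2F_1((1+y_n)/2)$, which takes the value $1$ at $y = -e_n$ (where $y_n = -1$). Hence $\Psi^\lambda \ne 0$ as a distribution, and similarly for $\ti\Psi^\lambda$.

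For the wavefront set condition, I would inspect each of the three families in Theorem \ref{thm:wf}. At the base point $e_n$ the set $WF_A(\Psi^\lambda)$ consists of covectors $v$ with $v_0 < 0$, so $-WF_A(\Psi^\lambda)$ contributes covectors with $v_0 > 0$ at that point; these are disjoint. At a point $e_n + v$ with $v_0 > 0$ the wavefront covectors are positive multiples of $(-v_0, v_1, \dots, v_{n-1})$, which have negative time-component, while the covectors in $-WF_A(\Psi^\lambda)$ at the same base point come from the family at $v_0 < 0$ reflected, giving positive multiples of $(v_0, -v_1, \dots, -v_{n-1})$ with positive time-component; again disjoint. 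The case $v_0 < 0$ is symmetric. Thus $WF_A(\Psi^\lambda) \cap -WF_A(\Psi^\lambda) = \emptyset$, and an identical check, with the roles of the two half-spaces swapped, gives the same conclusion for $\ti\Psi^\lambda$.

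The only mild subtlety, and the step I would be most careful about, is keeping track of the three strata of the wavefront set simultaneously when negating: one must verify both that no covector at the tip $e_n$ can coincide with any covector from the light-cone stratum at the same base point (this is automatic because those strata sit over different base points), and that within each light-cone stratum the sign flip genuinely exchanges the two $v_0 > 0$ and $v_0 < 0$ families without producing an overlap with itself. Once this bookkeeping is done, Theorem \ref{thm:supp} immediately delivers the corollary.
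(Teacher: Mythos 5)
Your proposal is correct and follows the same route as the paper: the corollary is obtained by combining the explicit wavefront sets of Theorem \ref{thm:wf} with the Strohmaier--Verch--Wollenberg criterion of Theorem \ref{thm:supp}, after checking $WF_A \cap -WF_A = \emptyset$. Your extra verifications (nonvanishing via ${}_2F_1(0)=1$ at $y=-e_n$, and the stratum-by-stratum disjointness bookkeeping) are details the paper leaves implicit, but the argument is the same.
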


\begin{Remark}
    Some of the questions that we are going to address in the future are as follows: 
    \begin{itemize}
     \item Can we construct all four spherical distributions as boundary values of some analytic kernels for the pair $(SO_{1,n}(\R{})_e,$ $SO_{1,n-1}(\R{})_e)$ ?
       \item Can we obtain similar results of \cref{thm:psi} for any $\lambda \in \C{}$?
        \item What are their analytic wavefront sets?
        \item Let $\tau \colon G \rightarrow G$ be the involution given by $\tau(g) = JgJ$, where $J$ is the orthogonal reflection in the hyperplane $x_n = 0$. Furthermore,
$$\mathfrak{g} = \mathfrak{h} \oplus \mathfrak{q}$$
with $\mathfrak{h} = \mathrm{ker}(\tau -1)$ and $\mathfrak{q} = \mathrm{ker}(\tau + 1)$. The wavefront sets of $\Psi^\lambda$ can also be written as $WF_A(\Psi^\lambda) \subset {\rm exp}(\mathfrak{q})\cdot e_n \times \Omega^\circ$ where $\Omega = \{v \in \R{n}: [v,v] = 0\}$. Can we draw similar conclusion in the context of  non compact casual symmetric spaces? 
    \end{itemize}

\end{Remark}



\begin{thebibliography}{AAAA}


\bibitem[AG90]{AG90}  D. N. Akhiezer, and S. G. Gindikin: {\it On Stein 
extensions of real symmetric spaces}. Math. Ann. {\bf 286} (1990), 1--12.

\bibitem[BD92]{BD92}  J.--L. Brylinski, and P.  Delorme: {\it Vecteurs distributions $H$-invariants pour les 
s\'eries principales g\'en\'eralis\'ees d’espaces sym\'etriques r\'eductifs et prolongement m\'eromorphe 
d’int\'egrales d’Eisenstein}.  Invent. Math. {\bf 109}  (1992), 619--664.

\bibitem[BJM13]{BJM13}  J. C. A. Barata,  C. D. J\"akel and J. Mund:{ \it The ${\mathscr P}(\varphi)_2$ Model on the de Sitter Space}. Mem. Amer. Math. Soc. {\bf 281} no.1389 (2023).

\bibitem[vdBD88] {vdBD88}  E. van den Ban
and P. Delorme:{ \it Quelques proprie\'te\'s des repr\'esentations sph\'eriques pour les espaces sym\'etriques r\'eductifs}. J. Funct. Anal. {\bf 80} 
 (1988), 284--307.
 
\bibitem[BM96]{BM96}J. Bros, U. Moschella:  {\it Two-point functions and quantum fields in de Sitter universe}. Rev. Math. Phys. {\bf 8}  (1996), 327--391.

\bibitem[BM04]{BM04} J. Bros and U. Moschella: {\it  Fourier analysis and holomorphic decomposition on the one-sheeted hyperboloid}
in ``G\'eom\'etrie complexe. II. Aspects contemporains dans les math\'ematiques et la physique, '', Hermann \'Ed. Sci. Arts, Paris (2004),27--58.

\bibitem[BV96]{BV96} J. Bros and G. A.  Viano:  {\it Connection between the harmonic analysis on the sphere and the harmonic analysis on 
the one-sheeted hyperboloid: an analytic continuation viewpoint, I. II}. Forum Math {\bf 8} (1996),  621--658, 659--722. 

\bibitem[BV97]{BV97} J. Bros and G. A. Viano:  {\it Connection between the harmonic analysis on the sphere and the 
harmonic analysis on the one-sheeted hyperboloid: an analytic continuation viewpoint-III}.  Forum Math {\bf 9} (1997), 165--191. 

\bibitem[D08]{D09} G. van Dijk: {\it Introduction to Harmonic Analysis and Generalized Gelfand Pairs}. Berlin, New York: De Gruyter (2009). 

\bibitem[Di79]{Di79} J. Dimock: {\it Scalar quantum field in an external gravitational field}. J. Math. Phys. {\bf 20} (1979), 2549--2555. 

\bibitem[Di04]{Di04} J. Dimock: {\it Markov quantum fields on a manifold}. Rev. Math. Phys. 16(2) (2004), 243–255. 

\bibitem[DH72]{DH72} J. J. Duistermaat and L.  H\"ormander: {\it Fourier integral operators. II}.  Acta Mathematica {\bf 128}  (1972), 183--269. 



\bibitem[F79]{F79} J. Faraut: {\it Distributions sphériques sur les espaces hyperboliques}. J. Math. Pures Appl. (9) \textbf{58} (1979) no.4, 369–444.

\bibitem[FN{\'O}23]{FNO23} J.  Frahm,  K.-H. Neeb and G. \'Olafsson:
{\it Nets of standard subspaces on non-compactly causal symmetric spaces}. arXiv:2303.10065.



\bibitem[G19]{G19}  C. G\'erard: {\it Microlocal Analysis of Quantum Fields on Curved Spacetimes}. ESI Lectures in Mathematics and Physics. 
European Mathematical Society (2019).




\bibitem[GK02a]{GK02a} S. Gindikin and B. Kr\"otz: {\it Invariant Stein domains 
in Stein symmetric spaces and a nonlinear complex convexity theorem}. Int. 
Math. Res. Not. {\bf 18} (2002), 959--971. 

\bibitem[GK02b]{GK02b} S. Gindikin and B. Kr\"otz:
{\it 
Complex crowns of Riemannian symmetric spaces and non-compactly causal symmetric spaces}. 
Trans. Amer. Math. Soc. {\bf 354}  (2002), 3299--3327.

\bibitem[GK{\'O}04]{GKO04} S. Gindikin,  B. Kr\"otz and G. {\'O}lafsson: {\it Holomorphic $H$-spherical distribution vectors
in principal series representations}. Invent Math. {\bf 158} (2004), 643--682.

\bibitem[GS64]{GS64}  I. M. Gel'fand and G. E. Shilov: {\it Generalized Functions: Properties and operations}.  Translated by E. Saletan, Academic Press, New York (1964).



\bibitem[H90]{H90} L. H\"ormander: {\it Linear Partial Differential Operators I}. Springer (1990).

\bibitem[H70]{H70} L. H\"ormander: {\it Linear differential operators}. Actes Congr. Int. Math. Nice   {\bf 1} (1970), 121--133.


\bibitem[H71]{H71} L. H\"ormander: {\it Uniqueness theorems and wave front sets for solutions of linear differential equations with analytic coefficients}. 
Comm. Pure Appl. Math {\bf 24} (1971),  671--704. 

\bibitem[Ho81]{HR} R. Howe: {\it Wave Front Sets of Representations of Lie Groups}. In:  ``Automorphic Forms, Representation 
Theory and Arithmetic''. Tata Institute of Fundamental Research Studies in Mathematics. Springer, Berlin, Heidelberg (1981).


\bibitem[HHO16]{HHO} B. Harris, H. He and G. \'Olafsson: {\it Wave front sets of reductive Lie group representations}. 
Duke Math. J. {\bf 165}  (2016) 793 - 846.

\bibitem[H{\'O}97]{HO97} J. Hilgert and G. {\'O}lafsson: {\it Causal Symmetric Spaces, Geometry and Harmonic
Analysis}. Perspectives in Mathematics {\bf 18} , Academic Press, San Diego  (1997).

\bibitem[JR08]{JR08}A. Jaffe and G. Ritter: {\it Reflection positivity and monotonicity}. J. Math. Phys. 49, 052301 (2008).

\bibitem[KSt04]{KSt04} B. Kr\"otz and R. J. Stanton, {\it Holomorphic extensions of representations. I. Automorphic functions}.
  Annals of Mathematics, {\bf 159} (2004),  641--724.
  

\bibitem[LS66]{LS} N. N. Lebedev:  {\it Special functions and their applications}. Translated by R.R. Silverman,  Dover, New York  (1972).

\bibitem[N73]{N73}  E. Nelson: {\it The Construction of Quantum Fields from Markov Fields}.
J. Funct. Anal. {\bf 12} (1973), 97–112.


\bibitem[N\'O18]{NO18} K. H.  Neeb and G. \'Olafsson: {\it Reflection Positivity, A Representation Theoretic Perspective}. Springer
Briefs in Mathematical Physics {\bf 32} (2018).

\bibitem[N\'O20]{NO20}  K. H. Neeb and G. \'Olafsson: {\it Reflection positivity on spheres}. Anal. Math. Phys. {\bf 10} (2020).

\bibitem[N\'O22]{NO22}  K. H.  Neeb and G. \'Olafsson: {\it Algebraic quantum field theory and causal symmetric
spaces}. arXiv:2219.01299v1. 

\bibitem[OSe80]{OSe80} T. Oshima and J. Sekiguchi : {\it Eigenspaces of invariant differential operators on an affine symmetric space}. Inventiones mathematicae {\bf  57} (1980), 1-81.

\bibitem[OS73]{OS73} K. Osterwalder and R. Schrader: {\it Axioms for Euclidean Green’s functions. I}. Comm.
Math. Phys. {\bf 31} (1973), 83–112.
\bibitem[OS75]{OS75} K. Osterwalder and R. Schrader: {\it Axioms for Euclidean Green’s functions. II}, Comm. Math. Phys. {\bf42} (1975), 281–305.

\bibitem[\'OS23]{OS23} G. \'Olafsson and I. Sitiraju: {\it Analytic wavefront sets of spherical distributions on the de Sitter space}. arXiv:2309.10685.


\bibitem[RM96]{RM} M. Radzikowski: {\it Micro-local approach to the Hadamard condition in quantum field theory on curved space-time}. 
Commun. Math. Phys. {\bf 179} (1996), 529–553.

\bibitem[SVW02]{SVW02} A. Strohmaier, R. Verch and M. Wollenberg:  {\it Microlocal analysis of quantum fields on curved space–times:
Analytic wave front sets and Reeh–Schlieder theorems}. Journal of Mathematical Physics, {\bf 43} (2002), 5514--5530.
 
\bibitem[V99]{V99} R. Verch:  {\it  Wavefront Sets in Algebraic Quantum Field Theory}. Comm. Math. Phys. {\bf 205} (1999), 337–367.




\end{thebibliography}
\end{document}